\newcommand\eqref[1]{(\ref{#1})}
\let\eqref\relax
\newcommand{\sign}{\text{ sgn}}
\newtheorem{cond}{Condition}
\newtheorem{rem}{Remark}
\newtheorem{exmp}{Example}
\begin{document}

\title{Stability of steady states for a class of flocculation equations
with growth and removal}

\author{Inom Mirzaev\thanks{Department of Applied Mathematics, University of Colorado, Boulder, CO 80304-0526, USA.}
\and David M. Bortz$^{*}$\thanks{Corresponding author ({\tt dmbortz@colorado.edu})}}
\maketitle
\begin{abstract}
Flocculation is the process whereby particles (i.e., \emph{flocs})
in suspension reversibly combine and separate. The process is widespread
in soft matter and aerosol physics as well as environmental science
and engineering. We consider a general size-structured flocculation
model, which describes the evolution of flocs in an aqueous environment.
Our work provides a unified treatment for many size-structured models
in the environmental, industrial, medical, and marine engineering
literature. In particular, our model accounts for basic biological
phenomena in a population of microorganisms including growth, death,
sedimentation, predation, renewal, fragmentation and aggregation.
Our central goal in this paper is to rigorously investigate the long-term
behavior of this generalized flocculation model. Using results from
fixed point theory we derive conditions for the existence of continuous,
non-trivial stationary solutions. We further apply the principle of
linearized stability and semigroup compactness arguments to provide
sufficient conditions for local exponential stability of stationary
solutions as well as sufficient conditions for instability.

The end results of this analytical development are relatively simple
inequality-criteria which thus allows for the rapid evaluation of
the existence and stability of a non-trivial stationary solution.
To our knowledge, this work is the first to derive precise existence
and stability criteria for such a generalized model. Lastly, we also
provide an illustrating application of this criteria to several flocculation
models.\end{abstract}

\begin{keywords}
Flocculation model, nonlinear evolution equations, principle of linearized
stability, spectral analysis, structured populations dynamics, semigroup
theory \end{keywords}

\begin{AMS}
35Q02, 35P02, 45C02, 45G02, 45K02, 92B05
\end{AMS}

\section{Introduction}

\emph{Flocculation} is the process whereby particles (i.e., flocs)
in suspension reversibly combine and separate. The process is widespread
in soft matter and aerosol physics as well as environmental science
and engineering. A popular model for flocculation is a 1D nonlinear
partial integro-differential equation which describes the time-evolution
of the particle size number density. In the engineering literature,
this equation can be derived using a so-called population balance
equation (PBE) framework and we direct the interested reader to the
book by Ramkrishna \citep{Ramkrishna2000} for more on this framework.

Previous analytical work on these models focused on classes of flocculation
equations that did not allow for the \emph{vital dynamics} (i.e.,
birth and death) of individual particles. These phenomena are obviously
critical features in the modeling of microbial flocculation. Accordingly,
in this work we consider a general size-structured flocculation model
which accounts for growth, aggregation, fragmentation, surface erosion
and sedimentation. The variable $p(t,\,x)$ denotes the number density
of flocs of size $x$ at time $t$, and for a given interval $U\subseteq(x_{0},\,x_{1})$,
the function $\chi_{U}$ represents the characteristic function of
the interval $U$. A floc is assumed to have a minimum and maximum
size $x_{0}$ and $x_{1}$ such that $0\le x_{0}<x_{1}\le\infty$.The
equations for the microbial flocculation model can be written as 
\begin{eqnarray}
\partial_{t}p & = & \mathcal{F}[p]\label{eq: agg and growth model}
\end{eqnarray}
where
\[
\mathcal{F}[p]:=\mathcal{G}[p]+\mathcal{A}[p]+\mathcal{B}[p],
\]
$\mathcal{G}$ denotes growth
\begin{equation}
\mathcal{G}[p]:=-\partial_{x}(gp)-\mu(x)p(t,\,x)\,,\label{eq:Growth}
\end{equation}
$\mathcal{A}$ denotes aggregation
\begin{alignat}{1}
\mathcal{A}[p] & :=\frac{1}{2}\chi_{[2x_{0},\,x_{1})}(x)\int_{x_{0}}^{x-x_{0}}k_{a}(x-y,\,y)p(t,\,x-y)p(t,\,y)\,dy\nonumber \\
 & \quad-\chi_{[x_{0},\,x_{1}-x_{0}]}p(t,\,x)\int_{x_{0}}^{x_{1}-x}k_{a}(x,\,y)p(t,\,y)\,dy\label{eq:Aggregation}
\end{alignat}
and $\mathcal{B}$ denotes breakage 
\begin{equation}
\mathcal{B}[p]:=\chi_{(x_{0},\,x_{1}-x_{0}]}(x)\int_{x}^{x_{1}}\Gamma(x;\,y)k_{f}(y)p(t,\,y)\,dy-\frac{1}{2}\chi_{[2x_{0},\,x_{1})}k_{f}(x)b(t,\,x)\,.\label{eq:Breakage}
\end{equation}
The boundary condition is traditionally defined at the smallest size
$x_{0}$ and the intial condition is defined at $t=0$

\[
g(x_{0})p(t,\,x_{0})=\int_{x_{0}}^{x_{1}}q(x)p(t,\,x)dx,\quad p(0,\,x)=p_{0}(x)\in L^{1}(I)\,.
\]

Note that \emph{in vivo}, there are no flocs of size zero and the
flocs cannot grow indefinitely, so the only biologically realistic
case is $0<x_{0}<x_{1}<\infty$. However, when $x_{0}>0$, the characteristic
functions appearing in equations (\ref{eq:Aggregation}) and (\ref{eq:Breakage})
make our theoretical development rather cumbersome. Hence, for the
sake of convenience, in this paper we consider the case $x_{0}=0<x_{1}<\infty$,
and postpone the analysis of the case with $0<x_{0}<x_{1}=\infty$
for our future papers. Therefore, we will denote the closed interval
by $I:=[0,\,x_{1}]$ and will make extensive use of this interval
in our development. We carry out the analysis of this work (unless
otherwise specified) on the space of absolutely integrable functions
on $I$, denoted by $L^{1}(I)$. We also note that well-posedness
of the flocculation model on this space has been established by Banasiak
and Lamb \citep{Lamb2009}.

\subsection{Background and model terms\label{sub:Model-equations}}

In this section, we will provide a brief background and overiew of
the individual terms in the general flocculation equation above.

To begin, the Sinko-Streifer \citep{Sinko1967} terms in (\ref{eq:Growth})
correspond to the growth and removal of flocs, respectively. The function
$g(x)$ represents the average growth rate of the flocs of size $x$
due to mitosis, and the coefficient $\mu(x)$ represents a size-dependent
removal rate due to gravitational sedimentation and cell death. Specifically,
when an individual cell in the floc of size $x$ divides into daughter
cells, the new cells can remain with the floc, contributing in a increase
in its total size. Conversely, a daughter cell can also leave the
floc to form a new single-cell floc. This second case is modeled by
McKendrick-von Foerster type renewal boundary conditions,
\begin{equation}
g(x_{0})p(t,\,x_{0})=\int_{x_{0}}^{x_{1}}q(x)p(t,\,x)dx\,,\label{eq:boundary condition}
\end{equation}
where the renewal rate $q(x)$ represents the number of new cells
that leave a floc of size $x$ and enter the single cell population.
We note that this boundary condition could also be used to model the
surface erosion of flocs, where single cells are eroded off the floc
and enter single cell population. The well-posedness and stability
of equilibrium solutions of the Sinko-Streifer equations has been
extensively studied by many researchers using a wide variety of mathematical
conditions \citep{Gurtin1972,Gurtin1979,BanksKappel1989,Equilibria1983,Pruss1983,Diekmann1984c}.
For numerical simulation of the model, a convergent numerical scheme
has been proposed in \citep{BanksKappel1989}, and inverse problems
for estimation of the parameters of the model have been discussed
in \citep{Banks1987,BanksKunisch1989,Fitzpatrick1993}. 

The aggregation of flocs into larger ones is modeled in (\ref{eq:Aggregation}),
by the Smoluchowski coagulation equation. The function $k_{a}(x,\,y)$
is the aggregation kernel, which describes the rate with which the
flocs of size $x$ and $y$ agglomerate to form a floc of size $x+y$.
This equation has been widely used, e.g., to model the formation of
clouds and smog in meteorology \citep{Pruppacher1980}, the kinetics
of polymerization in biochemistry \citep{Ziff1980}, the clustering
of planets, stars and galaxies in astrophysics \citep{Makino1998},
and even schooling of fish in marine sciences \citep{Niwa1998}. The
equation has also been the focus of considerable mathematical analysis.
For the aggregation kernels satisfying the inequality $k_{a}(x,\,y)\le1+x+y$,
existence of mass conserving global in time solutions were proven
\citep{Dubovski1996,Fournier2005,Menon2005} (for some suitable initial
data). conversely, for aggregation kernels satisfying $(xy)^{\gamma/2}\le k_{a}(x,\,y)$
with $1<\gamma\le2$, it has been shown that the total mass of the
system blows up in a finite time (referred as a \emph{gelation time})
\citep{Escobedo2002a}. For a review of further mathematical results,
we refer readers to review articles by Aldous \citep{Aldous1999},
Menon and Pego \citep{Menon2006}, and Wattis \citep{Wattis2006a}
and the book by Dubovskii \citep{Dubovskii1994}. Lastly, although
the Smoluchowski equation has received substantial theoretical work,
the derivation of analytical solutions for many realistic aggregation
kernels has proven elusive. Towards this end, many discretization
schemes for numerical simulations of the Smoluchowski equations have
been proposed, and we refer interested readers to the review by Bortz
\citep[\S 6]{Bortz2014}.

The breakage of flocs due to fragmentation is modeled by the terms
in (\ref{eq:Breakage}), where the fragmentation kernel $k_{f}(x)$
calculates the rate with which a floc of size $x$ fragments. The
integrable function $\Gamma(x;y)$ represents the post-fragmentation
probability density of daughter flocs for the fragmentation of the
parent flocs of size $y$. The post-fragmentation probability density
function $\Gamma$ is one of the least well-understood terms in the
flocculation model. Many different forms are used in the literature,
among which normal and log-normal densities are the most common \citep{HanEtal2003AICHEJ,Spicer1996}.
Recent modeling and computational work suggests that normal and log-normal
forms for $\Gamma$ are not correct and that a form closer to an $\arcsin(x;y)$
density would be more accurate \citep{BortzByrne2011,ByrneEtal2011pre}.
However, in this work we do not restrict ourselves to any particular
form of $\Gamma$, and instead simply assume that the function $\Gamma$
satisfies the mass conservation requirement. In other words, all the
fractions of daughter flocs formed upon the fragmentation of a parent
floc sum to unity,
\[
\int_{x_{0}}^{y}\Gamma(x;\,y)\,dx=1\text{ for all }y\in(x_{0},\,x_{1}].
\]

\subsection{Overview of model assumptions}

The flocculation model, presented in (\ref{eq: agg and growth model}),
is a generalization of many mathematical models appearing in the size-structured
population modeling literature and has broad applications in environmental,
industrial, medical, and marine sciences. For example, when the fragmentation
kernel is omitted, $k_{f}\equiv0$, the flocculation model reduces
to algal aggregation model used to describe evolution of phytoplankton
community \citep{AcklehFitzpatrick1997}. When the removal and renewal
rates are set to zero, the flocculation model simplifies to a model
used to describe the proliferation of \emph{Klebsiella pneumoniae
}in a bloodstream \citep{Bortz2008}. Furthermore, the flocculation
model, with only growth and fragmentation terms, was used to investigate
the elongation of prion polymers in infected cells \citep{Gabriel2011,JAUFFRET2010}. 

The flocculation model in this form (\ref{eq: agg and growth model})
was first considered by Banasiak and Lamb in \citep{Lamb2009}, where
they employed the flocculation model to describe the dynamical behavior
of phytoplankton cells. The authors showed that under some conditions
the flocculation model is well-posed, i.e., there exist a unique,
global in time, positive solution for every absolutely integrable
initial distribution. For the case $x_{1}=\infty$, Banasiak \citep{Africa2011}
establishes that for certain range of parameters, the solutions of
the flocculation model blow up in finite time. Nevertheless, to the
best of our knowledge, for the case $x_{1}<\infty$ the long-term
behavior of this model has not been considered. This is mainly due
to nonlinear nature of Smoluchowski coagulation equations used for
modeling aggregation. Hence, our main goal in this paper is to study
the long-term behavior of the broad class of flocculation models described
in (\ref{eq: agg and growth model}). For the remainder of this work
, we make the following assumptions
\begin{alignat*}{1}
(\mathbf{A}1)\qquad & g\in C^{1}(I)\qquad g(x)>0\:\text{ for }x\in I\\
(\mathbf{A}2)\qquad & k_{a}\in W^{1,\,\infty}(I\times I),\quad k_{a}(x,\,y)=k_{a}(y,\,x)\\
 & \text{ and }k_{a}(x,\,y)=0\,\,\,\text{ if }x+y\ge x_{1}\,,\\
(\mathbf{A}3)\qquad & \mu\in C(I)\qquad\text{and }\mu\ge0\text{ a.e. on }I\,,\\
(\mathbf{A}4)\qquad & q\in L^{\infty}(I)\qquad\text{and }q\ge0\text{ a.e. on }I\,,\\
(\mathbf{A}5)\qquad & k_{f}\in C(I)\qquad k_{f}(0)=0\text{ and }k_{f}\ge0\text{ a.e. on }I\,,\\
(\mathbf{A}6)\qquad & \Gamma(\cdot,\,y)\in W^{1,\,\infty}(I),\quad\Gamma(x;\,y)\ge0\text{ for }x\in(0,\,y];\\
 & \text{and }\Gamma(x;\,y)=0\text{ for }x\in(y,\,x_{1})\,.
\end{alignat*}

Assumption ($\mathbf{A}1$) states that the floc of any size has strictly
positive growth rate. This in turn implies that flocs can grow beyond
the maximal size $x_{1}$, i.e., the model ignores what happens beyond
the maximal size $x_{1}$ (as many authors in the literature have
done \citep{Farkas2010,AcklehFitzpatrick1997,Ackleh1997,Farkas2007}).
We also note that the Assumption $(\mathbf{A}1)$ generates biologically
unrealistic condition $g(0)>0$, i.e., the flocs of size zero also
have positive growth rate. However, this assumption is crucial for
our work, and thus we postpone the analysis of the case $g(0)=0$
for our future papers. Assumption $(\mathbf{A}2)$ states that for
the aggregates of size $x$ and $y$ the aggregation rate is zero
if the combined size of the aggregates is larger than the maximal
size. Lastly, Assumption $(\mathbf{A}3)$ on $\mu(x)$ enforces continuous
dependence of the removal on the size of a floc and ensures that every
floc is removed with a non-negative rate.

When the long-term behavior of biological populations is considered,
many populations converge to a stable time-independent state. Thus,
identifying conditions under which a population converges to a stationary
state is one of the most important applications of mathematical population
modeling. It is trivially true that a zero stationary solution exists,
but we are also interested in non-trivial stationary solutions of
the flocculation model. Hence, in Section \ref{sec:Existence-of-a postive}
we first show that under some suitable conditions on the model parameters
the flocculation equation has at least one non-trivial (non-zero and
non-negative) stationary solution. Once a stationary solution to a
model is shown to exist, the next natural question is whether it is
stable or unstable. When the associated evolution equation of a population
model is linear, many of stability properties can be deduced from
the spectral properties of this linear operator \citep{Diekmann1984c,Greiner1988a}.
However, almost no information about the operator can be deduced from
the spectrum of a nonlinear operator \citep{appell2004}. Moreover,
there is no general consensus among mathematicians on how to define
spectrum of a nonlinear operator. Thus, our stability analysis in
this work is based on the \emph{principle of linearized stability}
for nonlinear evolution equations \citep{Webb1985,Kato1995}. Hence,
in Section \ref{sec:Principle-of-linearized} we summarize the principle
of linearized stability and linearize the flocculation model around
its stationary solutions. In Section \ref{sec:Principle-of-linearized}
we also derive conditions for the regularity of the linearized flocculation
model. Next, in Sections \ref{sec:zero Linearized-stability-results}
and \ref{sec:Linearized-stability-(instabilit} we derive sufficiency
conditions for the linearized stability and instability of zero and
non-zero stationary solutions. In Section \ref{sec:Illustration-of-the results}
we illustrate our results with several examples. Finally, in Section
\ref{sec:Concluding-remarks}, we summarize and discuss the conclusions
of this work.

\section{\label{sec:Existence-of-a postive}Existence of a positive stationary
solution}

The flocculation model under our consideration (\ref{eq: agg and growth model}),
accounts for physical mechanisms such as growth, removal, fragmentation,
aggregation and renewal of microbial flocs. Thus, under some conditions,
which balance these mechanisms, one could reasonably expect that the
model possesses a non-trivial stationary solution. Hence, our main
goal in this section is to derive sufficient conditions for the model
terms such that the equation (\ref{eq: agg and growth model}) engenders
a positive stationary solution.

Recall that at a steady state we should have
\begin{equation}
p_{t}=0=\mathcal{F}[p]\,.\label{eq:steady state equation}
\end{equation}
By Assumption $(\mathbf{A}1)$, we know that $1/g\in C(I)$ and thus
we can define $p=f/g$ for some $f\in C(I)$. The substitution of
this $f$ into (\ref{eq:steady state equation}), integration between
$0$ and an arbitrary $x$, and rearrangment of the terms yields
\begin{alignat*}{1}
f(x) & =\int_{0}^{x_{1}}\frac{q(y)}{g(y)}f(y)\,dy-\int_{0}^{x}\frac{k_{f}(y)/2+\mu(y)}{g(y)}f(y)\,dy+\int_{0}^{x}\int_{z}^{x_{1}}\frac{\Gamma(z;\,y)k_{f}(y)}{g(y)}f(y)\,dy\,dz\\
 & \quad+\frac{1}{2}\int_{0}^{x}\int_{0}^{z}\frac{k_{a}(z-y,\,y)}{g(z-y)g(y)}f(z-y)f(y)\,dy\,dz-\int_{0}^{x}\frac{f(z)}{g(z)}\int_{0}^{x_{1}}\frac{k_{a}(z,\,y)}{g(y)}f(y)\,dy\,dz\,.
\end{alignat*}
We now define the operator $\Phi$ as
\begin{alignat}{1}
\Phi[f](x) & :=\int_{0}^{x_{1}}\frac{q(y)}{g(y)}f(y)\,dy-\int_{0}^{x}\frac{k_{f}(y)/2+\mu(y)}{g(y)}f(y)\,dy+\int_{0}^{x}\int_{z}^{x_{1}}\frac{\Gamma(z;\,y)k_{f}(y)}{g(y)}f(y)\,dy\,dz\nonumber \\
 & \quad+\frac{1}{2}\int_{0}^{x}\int_{0}^{z}\frac{k_{a}(z-y,\,y)}{g(z-y)g(y)}f(z-y)f(y)\,dy\,dz-\int_{0}^{x}\frac{f(z)}{g(z)}\int_{0}^{x_{1}}\frac{k_{a}(z,\,y)}{g(y)}f(y)\,dy\,dz\,.\label{eq:definition of phi}
\end{alignat}
and will use a fixed point theorem to prove the existence of a fixed
point $f$ of $\Phi$ . This in turn will allow us to claim that equation
(\ref{eq:steady state equation}) has at least one non-trivial positive
solution.

The use of fixed point theorems for showing existence of non-trivial
stationary solutions is not new in size-structured population modeling.
For example, fixed point theorems, based on Leray-Schauder degree
theory, have been used to find stationary solutions of linear Sinko-Streifer
type equations \citep{Farkas2010,Pruss1983}. Moreover, the Schauder
fixed point theorem has been used to establish the existence of steady
state solutions of nonlinear coagulation-fragmentation equations \citep{Laurencot2005}.
For our purposes we will use the following fixed point theorem, and
refer readers to \citep{Amann1976} for the full discussion of the
proof.\footnote{Hereafter, we refer to the following theorem as ``the fixed point
theorem''}
\begin{theorem}
Let $\mathcal{X}$ be a Banach space, $K\subset\mathcal{X}$ a closed
convex cone, $K_{r}=K\cap B_{r}(0),\,\Phi:\,K_{r}\to K$ continuous
such that $\Phi\left(K_{r}\right)$ is relatively compact. Assume
that 
\begin{enumerate}
\item $\Phi[x]\ne\lambda x$ for all $\left\Vert x\right\Vert =r$ and $\lambda>1$.
\item There exists a $\rho\in(0,\,r)$ and $k\in K\backslash\{0\}$ such
that 
\[
x-\Phi[x]\ne\lambda k\quad\text{for all }\left\Vert x\right\Vert =\rho\text{ and }\lambda>0\,.
\]

\end{enumerate}
Then $\Phi$ has at least one fixed point $x_{0}\in K$ such that
$\rho<\left\Vert x_{0}\right\Vert <r$.
\end{theorem}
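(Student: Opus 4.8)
The statement is a standard consequence of the theory of the fixed point index for compact maps on a cone, so the plan is to build (or recall) that index and then reduce the two hypotheses to two index computations. Recall that to an admissible triple $(\Phi,U,K)$ --- with $U$ relatively open and bounded in $K$ and $\Phi\colon\overline U\to K$ compact and fixed-point-free on the relative boundary $\partial U$ --- the index assigns an integer $i(\Phi,U,K)$ enjoying: (i) \emph{normalization}, a constant map with value in $U$ has index $1$; (ii) \emph{additivity/excision}, if $U_1,U_2\subset U$ are disjoint, relatively open, and $\Phi$ has no fixed point in $\overline U\setminus(U_1\cup U_2)$, then $i(\Phi,U,K)=i(\Phi,U_1,K)+i(\Phi,U_2,K)$; (iii) \emph{homotopy invariance} along compact homotopies that remain fixed-point-free on $\partial U$; and the \emph{existence property}, that $i(\Phi,U,K)\neq 0$ forces a fixed point of $\Phi$ in $U$. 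Write $K_r=K\cap B_r(0)$ as in the statement, with relative boundary $\{x\in K:\|x\|=r\}$, and similarly $K_\rho$.

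First I would show $i(\Phi,K_r,K)=1$. Consider the compact homotopy $H(t,x)=t\,\Phi(x)$ for $t\in[0,1]$. A boundary fixed point $x=t\Phi(x)$ with $\|x\|=r$ forces, for $t\in(0,1]$, the relation $\Phi(x)=\lambda x$ with $\lambda=1/t\geq 1$; the case $\lambda>1$ is ruled out by hypothesis (1), while $t=0$ gives $x=0$, contradicting $\|x\|=r$. Thus the homotopy is admissible, and by homotopy invariance and normalization $i(\Phi,K_r,K)=i(0,K_r,K)=1$. Next I would show $i(\Phi,K_\rho,K)=0$ using the vector $k\in K\setminus\{0\}$ from hypothesis (2). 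Consider $H(t,x)=\Phi(x)+t\,k$ for $t\geq 0$: a boundary fixed point with $\|x\|=\rho$ reads $x-\Phi(x)=t\,k$, which for $t>0$ is exactly the forbidden relation in (2). Since $\Phi(\overline{K_\rho})$ is relatively compact, hence bounded, for $t$ large we have $\|\Phi(x)+t\,k\|>\rho$ for every $x\in\overline{K_\rho}$, so $\Phi+t\,k$ is fixed-point-free on all of $\overline{K_\rho}$ and its index is $0$; homotopy invariance then gives $i(\Phi,K_\rho,K)=0$.

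Finally, since $K_\rho\subset K_r$ and $\Phi$ has no fixed point on the separating sphere $\{\|x\|=\rho\}$, additivity yields
\[
i\bigl(\Phi,\,K_r\setminus\overline{K_\rho},\,K\bigr)=i(\Phi,K_r,K)-i(\Phi,K_\rho,K)=1-0=1\neq 0,
\]
so by the existence property $\Phi$ has a fixed point $x_0$ in the open annular region $\{x\in K:\rho<\|x\|<r\}$, which is the assertion. I expect the genuine obstacle to be the construction of the index itself: because a cone $K$ is not open in $\mathcal X$, one cannot invoke the Leray--Schauder degree directly but must first compose with a retraction $\pi\colon\mathcal X\to K$ onto the closed convex set $K$, set $i(\Phi,U,K):=\deg(I-\Phi\circ\pi,\;\pi^{-1}(U)\cap B,\,0)$ for a sufficiently large ball $B$, and verify this is well defined (independent of $\pi$ and $B$) and inherits properties (i)--(iii); this is the content of the cited reference and is where all the real work lies. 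The only other point needing care is the borderline eigenvalue cases $\lambda=1$ in (1) and $\lambda=0$ in (2): these correspond to an honest fixed point sitting exactly on a bounding sphere, and one remarks that such a point is itself a solution, so that either the index homotopies above are admissible or a fixed point has already been produced.
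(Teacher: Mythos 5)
The paper does not prove this theorem; it is quoted verbatim from the cited reference (Amann, 1976), where it is established by exactly the fixed-point-index computation you outline: index $1$ on $K_r$ via the homotopy $t\Phi$, index $0$ on $K_\rho$ via the translation $\Phi+tk$, and additivity on the annulus, so your argument is correct and is essentially the standard (and the cited source's) proof. The one point to state more carefully is the borderline case you flag at the end: a fixed point lying exactly on $\|x\|=r$ (the $\lambda=1$ case) or on $\|x\|=\rho$ (the $\lambda=0$ case) is not excluded by hypotheses (1)--(2) and does not by itself satisfy the strict inequalities $\rho<\|x_0\|<r$ in the conclusion, so either those inequalities should be read as non-strict or one must additionally note that $\Phi$ is fixed-point-free on the two spheres before the indices are even defined.
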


Next, we show that the operator $\Phi$ defined in (\ref{eq:definition of phi})
satisfies the assumptions of the above theorem, which in turn implies
existence of a positive stationary solution of the operator $\mathcal{F}$.
Since we have been working on the space of absolutely integrable functions
on $I$, a natural candidate for the Banach space $\mathcal{X}$ would
be $L^{1}(I)$. However, to obtain sufficient regularity for stability
analysis of a stationary solution, we choose $\mathcal{X}=C(I)$ with
usual uniform norm $\left\Vert \cdot\right\Vert _{u}$ on $I$. We
also denote the usual essential supremum of a function by $\left\Vert \cdot\right\Vert _{\infty}$.
Since the positive cone in $C(I)$, denoted by $\left(C(I)\right)_{+}$,
is closed and convex, we choose $K$ to be $\left(C(I)\right)_{+}$.
Then $K_{r}=K\cap B_{r}(0)$, where $B_{r}(0)\subset\mathcal{X}$
is an open ball of radius $r$ and centered at zero, and $r$ has
yet to be chosen. We are now in a position to state the main result
of this section in the following theorem.
\begin{theorem}
\label{thm:positive steady state existence}Assume that the conditions
\[
0<q(x)+\frac{1}{2}k_{f}(x)-\mu(x),\tag{\textbf{C}1}
\]
and 
\[
\int_{x}^{x_{1}}\frac{1}{g(y)}\left(k_{f}(y)\int_{0}^{x}\Gamma(z;\,y)\,dz+q(y)\right)\,dy+\int_{0}^{x}\frac{1}{g(y)}\left(q(y)+\frac{1}{2}k_{f}(y)-\mu(y)\right)\,dy\le1\tag{\textbf{C}2}\,,
\]
hold true for all $x\in I$. Then the operator $\Phi$ defined in
(\ref{eq:definition of phi}) has at least one non-zero fixed point,
$f_{*}\in K$ satisfying 
\begin{equation}
0<\eta<\left\Vert f_{*}\right\Vert _{u}<r\label{eq:bounds for the fixed point}
\end{equation}
for some $\eta,\,r>0$. Moreover, the non-zero and non-negative function
\begin{equation}
p_{*}=\frac{f_{*}}{g}\in C(I)\label{eq:stationary solution}
\end{equation}
is the stationary solution of the flocculation model defined in (\ref{eq: agg and growth model}).\end{theorem}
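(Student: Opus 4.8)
The plan is to verify that $\Phi$ satisfies every hypothesis of the fixed point theorem with $\mathcal{X}=C(I)$, $K=\left(C(I)\right)_{+}$ and $K_{r}=K\cap B_{r}(0)$, and then to transfer the resulting fixed point back to a steady state through $p_{*}=f_{*}/g$. The structural preliminary step is to rewrite the three linear terms of $\Phi$ so that positivity becomes transparent. I would interchange the order of integration in the fragmentation gain term $\int_{0}^{x}\int_{z}^{x_{1}}\frac{\Gamma(z;y)k_{f}(y)}{g(y)}f(y)\,dy\,dz$ and split the resulting $y$-integral at $y=x$; using the normalization $\int_{0}^{y}\Gamma(z;y)\,dz=1$ this recombines the linear part of $\Phi$ into
\[
L[f](x)=\int_{0}^{x}\frac{q(y)+\tfrac{1}{2}k_{f}(y)-\mu(y)}{g(y)}f(y)\,dy+\int_{x}^{x_{1}}\frac{1}{g(y)}\Big(q(y)+k_{f}(y)\int_{0}^{x}\Gamma(z;y)\,dz\Big)f(y)\,dy .
\]
By $(\mathbf{C}1)$ both integrands are nonnegative for $f\in K$, so $L[f]\ge0$, and the ``row sum'' $c(x):=L[\mathbf{1}](x)$ is exactly the left-hand side of $(\mathbf{C}2)$, whence $\left\Vert L[f]\right\Vert _{u}\le\big(\sup_{x}c(x)\big)\left\Vert f\right\Vert _{u}\le\left\Vert f\right\Vert _{u}$.

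Next I would treat the aggregation (quadratic) part $Q:=\Phi-L$. Writing $h=f/g$ and changing variables $u=z-y,\,v=y$ in the gain integral shows the gain is supported on $\{u+v\le x\}$ while the loss is supported on the larger set $\{u\le x,\,u+v<x_{1}\}$, so that $Q[f]\le0$ pointwise and $\left|Q[f](x)\right|\le\frac{\left\Vert k_{a}\right\Vert _{\infty}}{g_{\min}^{2}}\,\big(\int_{0}^{x}f\big)\left\Vert f\right\Vert _{L^{1}}$, where $g_{\min}=\min_{I}g>0$. Comparing this with the lower bound $L[f](x)\ge\frac{c_{0}}{g_{\max}}\int_{0}^{x}f$, where $g_{\max}=\max_{I}g$ and $c_{0}=\inf_{I}(q+\tfrac{1}{2}k_{f}-\mu)>0$ by $(\mathbf{C}1)$, I can fix $r>0$ small enough that the linear gain dominates the aggregation loss on $K_{r}$, which yields $0\le\Phi[f]\le L[f]$ and hence $\Phi:K_{r}\to K$. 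Continuity of $\Phi$ on $K_{r}$ is immediate since it is a finite sum of bounded linear and bounded bilinear integral operators, and relative compactness of $\Phi(K_{r})$ follows from Arzel\`a--Ascoli: $\Phi(K_{r})$ is uniformly bounded by the previous estimates, and since every $x$-dependent term of $\Phi[f]$ is an integral over $[0,x]$ with an integrand bounded uniformly in $f\in K_{r}$, the family is uniformly Lipschitz in $x$, hence equicontinuous.

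Hypothesis 1 is then essentially free: for $\left\Vert f\right\Vert _{u}=r$ the bound $0\le\Phi[f]\le L[f]$ together with $(\mathbf{C}2)$ gives $\left\Vert \Phi[f]\right\Vert _{u}\le\left\Vert f\right\Vert _{u}=r$, so $\Phi[f]=\lambda f$ with $\lambda>1$ would force $\left\Vert \Phi[f]\right\Vert _{u}=\lambda r>r$, a contradiction. I expect Hypothesis 2 to be the main obstacle. Here I must produce $\rho\in(0,r)$ and a direction $k\in K\setminus\{0\}$ for which $f-\Phi[f]\ne\lambda k$ on $\left\Vert f\right\Vert _{u}=\rho$, $\lambda>0$; this is the compression estimate preventing the fixed point from degenerating to the origin. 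The natural route is to test the identity $f=\Phi[f]+\lambda k$ against a nonnegative functional: because $Q[f]\le0$ is quadratic and therefore negligible at scale $\rho$, the sign of $f-\Phi[f]$ near the origin is governed by $(I-L)f$, so the argument needs the renewal and fragmentation gain encoded in $L$ to be strong enough that $\Phi$ does not strictly contract small data. I would make this precise by choosing $k$ along a nonnegative eigendirection of the linear gain and $\rho$ small, together with $\inf_{\left\Vert f\right\Vert _{u}=\rho}\left\Vert \Phi[f]\right\Vert _{u}>0$, deriving a contradiction with $\lambda>0$; this is where the delicate balance between $(\mathbf{C}1)$ and $(\mathbf{C}2)$ enters.

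Granting both hypotheses, the fixed point theorem delivers $f_{*}\in K$ with $\rho<\left\Vert f_{*}\right\Vert _{u}<r$, so $\eta:=\rho$ gives \eqref{eq:bounds for the fixed point}. It remains to show $p_{*}=f_{*}/g$ solves \eqref{eq:steady state equation}. Since $f_{*}=\Phi[f_{*}]$ is precisely the $x$-integral of the steady-state relation $\mathcal{F}[f_{*}/g]=0$, differentiating $f_{*}=\Phi[f_{*}]$ in $x$ (the leading term $\int_{0}^{x_{1}}\tfrac{q}{g}f_{*}$ is constant in $x$ and drops out) recovers $\mathcal{F}[p_{*}]=0$ a.e.; the equicontinuity estimate shows $f_{*}$ is Lipschitz, so this derivative is legitimate. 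Evaluating $f_{*}=\Phi[f_{*}]$ at $x=0$ gives $g(0)p_{*}(0)=f_{*}(0)=\int_{0}^{x_{1}}q(y)p_{*}(y)\,dy$, which is exactly the renewal boundary condition \eqref{eq:boundary condition}. Finally $f_{*}\in C(I)$ and $g\in C(I)$ with $g>0$ give $p_{*}\in C(I)$, while $p_{*}\ge0$ and $p_{*}\ne0$ follow from $f_{*}\in K$ and $\left\Vert f_{*}\right\Vert _{u}\ge\eta>0$.
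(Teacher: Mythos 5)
Your setup, your rewriting of the linear part as
\[
L[f](x)=\int_{0}^{x}\frac{q(y)+\tfrac{1}{2}k_{f}(y)-\mu(y)}{g(y)}f(y)\,dy+\int_{x}^{x_{1}}\frac{1}{g(y)}\Big(q(y)+k_{f}(y)\int_{0}^{x}\Gamma(z;\,y)\,dz\Big)f(y)\,dy,
\]
the observation that the quadratic part satisfies $Q[f]\le0$ on the cone, and the consequent choice of small $r$ giving $\Phi:K_{r}\to K$ all match the paper's computations (the paper performs the same Fubini/normalization manipulation inline rather than isolating $L$). Your verification of Hypothesis 1 via $0\le\Phi[f]\le L[f]$ and $\Vert L[f]\Vert_{u}\le\big(\sup_{x}c(x)\big)\Vert f\Vert_{u}\le\Vert f\Vert_{u}$ is in fact cleaner than the paper's, which bounds $\lambda$ by the supremum in $(\mathbf{C}2)$ plus an $O(r)$ aggregation remainder. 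The compactness argument (uniform bound plus equicontinuity via a uniform Lipschitz estimate, then Arzel\`a--Ascoli) is the same as the paper's, which notes $\Phi(K_{r})\subset C^{1}(I)$.

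The genuine gap is Hypothesis 2, which you correctly identify as the crux but do not prove. You only describe a strategy --- ``choose $k$ along a nonnegative eigendirection of the linear gain'' together with $\inf_{\Vert f\Vert_{u}=\rho}\Vert\Phi[f]\Vert_{u}>0$ --- and neither ingredient is secured: no eigendirection is produced, and the infimum claim is doubtful on the sphere of $(C(I))_{+}$, since a tall narrow spike $f$ with $\Vert f\Vert_{u}=\rho$ can have arbitrarily small $L^{1}$ mass, making every integral term of $\Phi[f]$ arbitrarily small. The paper instead takes the concrete test direction $k\equiv1$: assuming $f-\Phi[f]=\lambda$ with $\Vert f\Vert_{u}=\eta$ and $\lambda>0$, it bounds $f(x)-\Phi[f](x)$ from above, uses the $\Gamma$-normalization to collapse the fragmentation gain, and invokes $(\mathbf{C}1)$ (which makes the coefficient $\mu-\tfrac{1}{2}k_{f}-q$ strictly negative) to force a contradiction with $\lambda>0$ for $\eta$ small. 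Whatever one thinks of the sharpness of that chain of inequalities, it is a concrete argument where your proposal has a placeholder; as written, your proof does not establish that the fixed point guaranteed is bounded away from zero, which is exactly the content of \eqref{eq:bounds for the fixed point} and the reason $f_{*}\ne0$. The remaining transfer back to $p_{*}=f_{*}/g$ (differentiating the fixed-point identity and reading off the renewal boundary condition at $x=0$) is fine and is more explicit than what the paper records.
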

\begin{proof}
For $f\in K_{r}$ we have 
\begin{alignat*}{1}
\Phi[p] & \ge\int_{0}^{x_{1}}\frac{q(y)}{g(y)}f(y)\,dy-\int_{0}^{x}\frac{k_{f}(y)/2+\mu(y)}{g(y)}f(y)\,dy\\
 & \quad+\int_{0}^{x}\int_{z}^{x}\frac{\Gamma(z;\,y)k_{f}(y)}{g(y)}f(y)\,dy\,dz-\int_{0}^{x}\frac{f(z)}{g(z)}\int_{0}^{x_{1}}\frac{k_{a}(z,\,y)}{g(y)}f(y)\,dy\,dz\\
 & \ge\int_{0}^{x}\frac{f(z)}{g(z)}\left[q(z)-\frac{1}{2}k_{f}(z)-\mu(z)-\int_{0}^{x_{1}-z}\frac{k_{a}(z,\,y)}{g(y)}f(y)\,dy\right]\,dz+\int_{0}^{x}\frac{k_{f}(y)f(y)}{g(y)}\underbrace{\int_{0}^{y}\Gamma(z;\,y)\,dz}_{=1}\,dy\\
 & \ge\int_{0}^{x}\frac{f(z)}{g(z)}\left[q(z)-\frac{1}{2}k_{f}(z)-\mu(z)-r\cdot\left\Vert k_{a}\right\Vert _{\infty}\left\Vert \frac{1}{g}\right\Vert _{1}\right]\,dz\,,
\end{alignat*}
where $\left\Vert \cdot\right\Vert _{1}$ represents the usual $L^{1}$
norm on $I$. The first condition of the theorem $(\mathbf{C}1)$
guarantees that 
\[
q(z)+\frac{1}{2}k_{f}(z)-\mu(z)>0\text{ for all }z\in I\,,
\]
so we can choose $r$ in $K_{r}$ sufficiently small such that $\Phi[f]\ge0$,
i.e., $\Phi\,:\,K_{r}\to K$. On the other hand, using the assumptions
$(\mathbf{A}1)$-$(\mathbf{A}6)$, it is straightforward to show that
$\Phi(K_{r})\subset C^{1}(I)$.\footnote{Recall that continuous differentiability implies uniform continuity,
and thus equicontinuity} This in turn, from Arzelà-Ascoli theorem, implies that the operator
$\Phi$ has relatively compact image. 

Next we prove the second assumption of the theorem. For the sake of
contradiction, suppose that there exist $f\in K_{r}$ with $\left\Vert f\right\Vert _{u}=r$
and $\lambda>1$ such that
\[
\Phi[f]=\lambda f\,.
\]
Then it follows that 
\begin{alignat*}{1}
\lambda f(x) & \le\int_{0}^{x_{1}}\frac{q(y)}{g(y)}f(y)\,dy-\int_{0}^{x}\frac{k_{f}(y)/2+\mu(y)}{g(y)}f(y)\,dy+\int_{0}^{x_{1}}\int_{z}^{x_{1}}\frac{\Gamma(z;\,y)k_{f}(y)}{g(y)}f(y)\,dy\,dz\\
 & \quad-\int_{x}^{x_{1}}\int_{z}^{x_{1}}\frac{\Gamma(z;\,y)k_{f}(y)}{g(y)}f(y)\,dy\,dz+\frac{1}{2}\int_{0}^{x}\int_{0}^{z}\frac{k_{a}(z-y,\,y)}{g(z-y)g(y)}f(z-y)f(y)\,dy\,dz\\
 & \le\int_{x}^{x_{1}}\frac{1}{g(y)}\left(k_{f}(y)\int_{0}^{x}\Gamma(z;\,y)\,dz+q(y)\right)f(y)\,dy\\
 & \quad+\int_{0}^{x}\frac{1}{g(y)}\left(q(y)+\frac{1}{2}k_{f}(y)-\mu(y)\right)f(y)\,dy+\left\Vert f\right\Vert _{u}^{2}\cdot\left\Vert k_{a}\right\Vert _{\infty}\cdot\left\Vert \frac{1}{g}\right\Vert _{1}^{2}\\
 & \le\left\Vert f\right\Vert _{u}\left[\int_{x}^{x_{1}}\frac{1}{g(y)}\left(k_{f}(y)\int_{0}^{x}\Gamma(z;\,y)\,dz+q(y)\right)\,dy\right.\\
 & \quad\left.+\int_{0}^{x}\frac{1}{g(y)}\left(q(y)+\frac{1}{2}k_{f}(y)-\mu(y)\right)\,dy+r\cdot\left\Vert k_{a}\right\Vert _{\infty}\cdot\left\Vert \frac{1}{g}\right\Vert _{1}^{2}\right]
\end{alignat*}
which yields that 
\begin{alignat*}{1}
\lambda & \le\sup\left\{ \int_{x}^{x_{1}}\frac{1}{g(y)}\left(k_{f}(y)\int_{0}^{x}\Gamma(z;\,y)\,dz+q(y)\right)\,dy+\int_{0}^{x}\frac{1}{g(y)}\left(q(y)+\frac{1}{2}k_{f}(y)-\mu(y)\right)\,dy\right\} \\
 & \quad+r\left\Vert k_{a}\right\Vert _{\infty}\left\Vert \frac{1}{g}\right\Vert _{1}^{2}\,.
\end{alignat*}
From the second condition of the theorem $(\mathbf{C}2)$ it follows
that
\[
\sup\left\{ \int_{x}^{x_{1}}\frac{1}{g(y)}\left(k_{f}(y)\int_{0}^{x}\Gamma(z;\,y)\,dz+q(y)\right)\,dy+\int_{0}^{x}\frac{1}{g(y)}\left(q(y)+\frac{1}{2}k_{f}(y)-\mu(y)\right)\,dy\right\} \le1\,.
\]
Then, we can choose $r$ sufficiently small such that it contradicts
the first assumption of the fixed point theorem, $\lambda>1$.

Next we will derive conditions for the second condition of the fixed
point theorem. For the sake of contradiction, let us choose $k\equiv1\in K\backslash\{0\}$
and assume that there exists $f\in K_{r}$ with $\left\Vert f\right\Vert _{u}=\eta<r$
and $\lambda>0$ such that
\[
f-\Phi[f]=\lambda k=\lambda\,.
\]
This equation in turn can be written as 
\begin{alignat}{1}
\lambda k & =\lambda=f(x)-\Phi[f](x)\nonumber \\
 & \le\left\Vert f\right\Vert _{u}-\int_{0}^{x_{1}}\frac{q(y)}{g(y)}f(y)\,dy+\int_{0}^{x}\frac{k_{f}(y)/2+\mu(y)}{g(y)}f(y)\,dy\nonumber \\
 & \quad-\int_{0}^{x}\int_{z}^{x}\frac{\Gamma(z;\,y)k_{f}(y)}{g(y)}f(y)\,dy\,dz+\int_{0}^{x_{1}}\frac{f(z)}{g(z)}\int_{0}^{x_{1}}\frac{k_{a}(z,\,y)}{g(y)}f(y)\,dy\,dz\nonumber \\
 & \le\eta-\int_{0}^{x_{1}}\frac{q(y)}{g(y)}f(y)\,dy+\int_{0}^{x}\frac{\mu(y)-k_{f}(y)/2}{g(y)}f(y)\,dy+\eta^{2}\cdot\left\Vert k_{a}\right\Vert _{\infty}\cdot\left\Vert \frac{1}{g}\right\Vert _{1}^{2}\nonumber \\
 & \le\int_{0}^{x}\frac{\mu(y)-k_{f}(y)/2-q(y)}{g(y)}f(y)\,dy+\eta^{2}\cdot\left\Vert k_{a}\right\Vert _{\infty}\cdot\left\Vert \frac{1}{g}\right\Vert _{1}^{2}+\eta\,,\label{eq:negative lambda}
\end{alignat}
which should hold for all $x\in I$. Thus, provided that the condition
$(\mathbf{C}1)$ holds, we can choose $\eta\in\left(0,\,r\right)$
sufficiently small such that we get a contradiction to the second
assumption of the fixed point theorem, $\lambda>0$. Hence, the fixed
point theorem guarantees the existence of a positive fixed point of
$\Phi$ satisfying the bounds (\ref{eq:bounds for the fixed point}).

Therefore, the function $p_{*}=f_{*}/g$ is a stationary solution
of the flocculation equations (\ref{eq: agg and growth model}). Moreover,
from the assumption ($\mathbf{A}1$) and the continuity of the fixed
point $f_{*}$ it follows that $p_{*}$ is non-zero, non-negative
and continuous on $I$.
\end{proof}

\section{\label{sec:Principle-of-linearized}Principle of linearized stability
and regularity properties of the linearized semigroup}

In this section we summarize the principle of linearized stability
as it applies to semigroups in general and our flocculation equation
in particular.

For a given autonomous ordinary differential equation,
\[
\dot{u}=f(u)\,,
\]
the method for determining the local asymptotic behavior of a stationary
solution $u_{*}$, $f(u_{*})=0$, by the eigenvalues of the Jacobian
$\mathbf{J}_{f}(u_{*})$ is quite well-known. In semigroup theory
this method is known as the \emph{principle of linearized stability}
and was developed in the context of semilinear partial differential
equations in \citep{Henry1981,Smoller1983,Webb1985}. Later, Kato
\citep{Kato1995} extended this principle to a broader range of nonlinear
evolution equations. Before presenting the principle of linearized
stability we introduce some terminology, which can be found in many
functional analysis books (see \citep{Belleni-Morante1998a} for instance). 

The \emph{growth bound} $\omega_{0}(\mathbf{A})$ of a strongly continuous
semigroup $\left(S(t)\right)_{t\ge0}$ with an infinitesimal generator
$\mathbf{A}$ is defined as
\[
\omega_{0}(\mathbf{A}):=\inf\left\{ \omega\in\mathbb{R}\,:\,\begin{array}{c}
\exists M_{\omega}\ge1\text{ such that}\\
\left\Vert S(t)\right\Vert \le M_{\omega}e^{\omega t}\text{ for all }t\ge0
\end{array}\right\} \,.
\]
The operator $D\mathbf{A}(f)$ denotes the Fréchet derivative of an
operator $\mathbf{A}$ evaluated at $f$, which is defined as 
\[
D\mathbf{A}(u)h=\mathbf{A}[u+h]-\mathcal{\mathbf{A}}[u]+o(h),\qquad\forall u\in\mathcal{D}(\mathbf{A})\,,
\]
where $o$ is little-o operator satisfying $\left\Vert o(h)\right\Vert \le b(r)\left\Vert h\right\Vert $
with increasing continuous function\textbf{ $b\,:\,[0,\,\infty)\to[0,\,\infty),\,b(0)=0$}. 

The\emph{ discrete spectrum} $\sigma_{D}(\mathbf{A})$ of an arbitrary
operator $\mathbf{A}$ on a Banach space $X$, is the subset of the
point spectrum of $\mathbf{A}$, 
\[
\sigma_{p}(\mathbf{A})=\left\{ \lambda\in\mathbb{C}\,|\,\exists\phi\ne0\in X\text{ s.t. }\mathbf{A}\phi=\lambda\phi\right\} \,,
\]
such that $\lambda\in\sigma_{D}(\mathbf{A})$ is an isolated eigenvalue
of finite multiplicity, i.e., the dimension of the set 
\[
\left\{ \psi\in X\,:\,\mathbf{A}\psi=\lambda\psi\right\} 
\]
is finite and nonzero. Let $\left(T(t)\right)_{t\ge0}$ be a $C_{0}$
semigroup on the Banach space $X$ with its infintesimal generator
$\mathbf{A}$. Then the limit $\omega_{1}(\mathcal{\mathbf{A}})=\lim_{t\to\infty}t^{-1}\log\left(\alpha[T(t)]\right)$
is well-defined and called the \emph{$\alpha$-growth bound} of $\left(T(t)\right)_{t\ge0}$.
The function $\alpha[T(t)]$ is a measure of non-compactness of the
semigroup $T(t)$ as defined as in \citep{Kuratowski}. This measure
associates non-negative numbers to operators (or sets), which tells
how close an operator (or a set) is to a compact operator (or set).
For example, for a bounded set $M$ in a Banach space, $\alpha[M]=0$
implies that $\overline{M}$ (closure of $M$) is a compact set. Analogously,
for a semigroup $\left(T(t)\right)_{t\ge0}$, $\alpha[T(t)]=0$ indicates
that the semigroup is eventually compact.

With the above definitions, we are now ready to present the principle
of linearized stability in the form of the following proposition (see
\citep{Webb1985} for the complete discussion of the proof of the
following proposition).
\begin{proposition}
\label{prop:linearized stability} Define the nonlinear operator $\mathcal{N}\,:\,\mathcal{D}(\mathcal{F})\subset L^{1}(I)\to L^{1}(I)$
and let $f_{*}\in\mathcal{D}(\mathcal{N})$ be a stationary solution
of (\ref{eq: agg and growth model}), i.e., $\mathcal{N}[f_{*}]=0$.
If $\mathcal{N}$ is continuously Fréchet differentiable on $L^{1}(I)$
and the linearized operator $\mathcal{L}=D\mathcal{N}(f_{*})$ is
the infinitesimal generator of a $C_{0}$-semigroup $T(t)$, then
the following statements hold:
\begin{enumerate}
\item If $\omega_{0}\left(\mathcal{L}\right)<0$, then $f_{*}$ is locally
asymptotically stable in the following sense: There exists $\eta,\,C\ge1,$
and $\alpha>0$ such that if $\left\Vert f-f_{*}\right\Vert <\eta$,
then a unique mild solution $T(t)f$, satisfies$\left\Vert T(t)f-f_{*}\right\Vert \le Ce^{-\alpha t}\left\Vert f-f_{*}\right\Vert $
for all $t\ge0$. 
\item If there exists $\lambda_{0}\in\sigma(\mathcal{L})$ such that $\text{Re}\,\lambda>0$
and 
\begin{equation}
\max\left\{ \omega_{1}(\mathcal{L}),\,\sup_{\lambda\in\sigma_{D}(\mathcal{L})\backslash\{\lambda_{0}\}}\text{Re}\,\lambda\right\} <\text{Re}\,\lambda_{0}\,,\label{eq: instability condition}
\end{equation}
then $f_{*}$ is an unstable equilibrium in the sense that there exists
$\varepsilon>0$ and sequence $\left\{ f_{n}\right\} $ in $X$ such
that $f_{n}\to f_{*}$ and $\left\Vert T(n)f_{n}-f_{*}\right\Vert \ge\varepsilon$
for $n=1,2,\dots$ .
\end{enumerate}
\end{proposition}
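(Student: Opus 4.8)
The plan is to treat the nonlinear equation as a semilinear perturbation of its linearization and to argue, as is standard in the theory of nonlinear semigroups, through the variation-of-constants (Duhamel) representation of mild solutions. Writing the mild solution of (\ref{eq: agg and growth model}) as $u(t)$ (reserving $T(t)$ for the \emph{linear} semigroup generated by $\mathcal{L}$) and setting $v := u - f_{*}$, I would use $\mathcal{N}[f_{*}]=0$ together with continuous Fr\'echet differentiability to write $\mathcal{N}[f_{*}+v]=\mathcal{L}v+R(v)$, where the remainder $R(v)=\mathcal{N}[f_{*}+v]-\mathcal{N}[f_{*}]-\mathcal{L}v$ inherits the little-$o$ structure recorded before the proposition, namely $\|R(v)\|\le b(\|v\|)\,\|v\|$ with $b$ increasing, continuous and $b(0)=0$. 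Since $\mathcal{L}$ generates $T(t)$, every mild solution then satisfies
\[
v(t)=T(t)v(0)+\int_{0}^{t}T(t-s)\,R(v(s))\,ds .
\]

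For part (1), because $\omega_{0}(\mathcal{L})<0$ I would fix $\beta\in(0,\,-\omega_{0}(\mathcal{L}))$ and $M\ge1$ with $\|T(t)\|\le Me^{-\beta t}$. Inserting the bound on $R$ into the Duhamel formula and multiplying by $e^{\beta t}$ gives, on any interval where $\|v(s)\|\le\eta$,
\[
e^{\beta t}\|v(t)\|\le M\|v(0)\|+M\int_{0}^{t}b(\|v(s)\|)\,e^{\beta s}\|v(s)\|\,ds .
\]
Bounding $b(\|v(s)\|)\le b(\eta)$ and applying Gronwall's inequality yields $\|v(t)\|\le M\|v(0)\|\,e^{-(\beta-Mb(\eta))t}$. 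Choosing $\eta$ small enough that $Mb(\eta)<\beta$ and setting $\alpha:=\beta-Mb(\eta)>0$ produces the stated decay, while the a~priori smallness $\|v(s)\|\le\eta$ is secured by a continuation argument: if $\|v(0)\|<\eta/M$, the estimate forces $\|v(t)\|<\eta$ on the maximal interval where the bound holds, so that interval is all of $[0,\infty)$. Taking $C=M$ closes this half.

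For part (2) the instability argument is the delicate one. Condition (\ref{eq: instability condition}) places the dominant eigenvalue $\lambda_{0}$ with $\text{Re}\,\lambda_{0}>0$ strictly above both the $\alpha$-growth bound $\omega_{1}(\mathcal{L})$ and the remainder of the discrete spectrum. The first inequality is decisive: it positions $\lambda_{0}$ above the measure-of-noncompactness growth bound, which guarantees that the part of $\sigma(\mathcal{L})$ with real part $\ge\text{Re}\,\lambda_{0}-\delta$ (small $\delta>0$) consists of finitely many isolated eigenvalues of finite multiplicity. I would then form the Riesz spectral projection $P$ onto the associated finite-dimensional $\mathcal{L}$-invariant subspace $X_{u}$, obtaining a $T(t)$-invariant splitting $X=X_{u}\oplus X_{s}$. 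On $X_{u}$ the semigroup acts by a finite matrix exponential whose dominant mode grows like $e^{\text{Re}\,\lambda_{0}t}$, whereas the gap forces $\|T(t)|_{X_{s}}\|\le\tilde{M}e^{\gamma t}$ with $\gamma<\text{Re}\,\lambda_{0}$. Projecting the Duhamel formula onto $X_{u}$ and $X_{s}$, and using that $R$ is higher order, I would construct initial data $f_{n}=f_{*}+\delta_{n}\phi$ with $\phi$ a unit eigenvector for $\lambda_{0}$ and $\delta_{n}\downarrow0$, and show by an iteration/contradiction argument that the unstable linear growth dominates the nonlinearity over a finite time window, forcing the solution out of a fixed $\varepsilon$-ball and producing the escaping sequence.

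The main obstacle is the semigroup decomposition underlying part (2): one must convert the generator-level gap (\ref{eq: instability condition}), phrased through $\omega_{1}(\mathcal{L})$ and the discrete spectrum, into a genuine exponential dichotomy for $T(t)$. This rests on identifying $\omega_{1}(\mathcal{L})$ with the logarithm of the essential spectral radius of $T(t)$ and on the fact that the spectrum of $T(t)$ outside the disk of that radius is discrete of finite multiplicity; only then is $P$ finite-rank and the complementary estimate on $X_{s}$ available. Once this dichotomy is established the escape argument is routine, so I would concentrate the technical effort there, following the nonlinear-semigroup framework of \citep{Webb1985}.
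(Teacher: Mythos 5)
The paper offers no proof of this proposition --- it is quoted directly from Webb \citep{Webb1985} --- and your outline reproduces the standard argument of that reference: variation of constants plus a Gronwall--bootstrap estimate for part (1), and for part (2) the identification of $\omega_{1}(\mathcal{L})$ with the essential growth bound of $T(t)$, yielding a finite-rank Riesz projection, an exponential dichotomy, and the usual escape construction along the dominant eigenvector. Both halves are sound as sketched, so there is nothing in the paper to compare against beyond the citation.
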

Having the explicit statement of the principle of linearized stability
in hand, we now show that the nonlinear operator $\mathcal{F}$ defined
in (\ref{eq:Breakage}) satisfies all the conditions of Proposition
\ref{prop:linearized stability}. Towards this end, we first establish
the elementary assumption of Proposition \ref{prop:linearized stability}
in the following lemma.
\begin{lemma}
The nonlinear operator $\mathcal{F}$ defined in (\ref{eq:Breakage})
is continuously Fréchet differentiable on $L^{1}(I)$.\end{lemma}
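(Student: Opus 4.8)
The plan is to exploit the additive decomposition $\mathcal{F}=\mathcal{G}+\mathcal{A}+\mathcal{B}$, since Fr\'echet differentiability is additive, and to treat the three constituents separately. The growth operator $\mathcal{G}$ in (\ref{eq:Growth}) and the breakage operator $\mathcal{B}$ in (\ref{eq:Breakage}) are both \emph{linear} in $p$: the former couples the unbounded transport term $-\partial_{x}(gp)$ with the bounded multiplier $-\mu p$, while the latter is a bounded integral operator by $(\mathbf{A}5)$--$(\mathbf{A}6)$. For any linear operator the difference quotient is exact, $\mathcal{G}[u+h]-\mathcal{G}[u]=\mathcal{G}[h]$ and $\mathcal{B}[u+h]-\mathcal{B}[u]=\mathcal{B}[h]$, so on the domain $\mathcal{D}(\mathcal{F})$ each operator equals its own Fr\'echet derivative at every base point with zero remainder, and since these derivatives do not depend on the base point they depend on it trivially continuously. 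Hence all the genuine work concentrates on the quadratic aggregation term $\mathcal{A}$ of (\ref{eq:Aggregation}).

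The key step is to recognize $\mathcal{A}$ as the diagonal restriction $\mathcal{A}[p]=\widetilde{\mathcal{A}}(p,p)$ of a bounded symmetric bilinear form $\widetilde{\mathcal{A}}\colon L^{1}(I)\times L^{1}(I)\to L^{1}(I)$, obtained by polarizing the two copies of $p$ (using the symmetry and boundedness of $k_{a}$ from $(\mathbf{A}2)$). I would then establish the single estimate
\[
\bigl\|\widetilde{\mathcal{A}}(p,q)\bigr\|_{1}\le C\,\|p\|_{1}\,\|q\|_{1},\qquad C=\tfrac{3}{2}\|k_{a}\|_{\infty},
\]
bounding the gain term $\tfrac{1}{2}\int_{0}^{x}k_{a}(x-y,y)p(x-y)q(y)\,dy$ by Young's convolution inequality (after replacing $k_{a}$ by $\|k_{a}\|_{\infty}$ and applying Tonelli), and the loss term $p(x)\int_{0}^{x_{1}-x}k_{a}(x,y)q(y)\,dy$ by extracting the inner integral in $L^{\infty}$, where it is dominated by $\|k_{a}\|_{\infty}\|q\|_{1}$, and integrating the remaining product against $|p|$.

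Once this bilinear bound is available, the conclusion follows from the standard fact that a continuous quadratic map is smooth. The candidate derivative is the bounded linear operator $D\mathcal{A}(u)h=2\,\widetilde{\mathcal{A}}(u,h)$, and the remainder is exactly the quadratic term $\widetilde{\mathcal{A}}(h,h)$, which obeys $\|\widetilde{\mathcal{A}}(h,h)\|_{1}\le C\|h\|_{1}^{2}$; this is of the little-o form $\|o(h)\|\le b(r)\|h\|$ required in the excerpt, with $b(r)=Cr$ increasing, continuous and $b(0)=0$. Collecting the pieces gives $D\mathcal{F}(u)=\mathcal{G}+\mathcal{B}+2\,\widetilde{\mathcal{A}}(u,\cdot)$. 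Continuity of $u\mapsto D\mathcal{F}(u)$ then holds because the entire $u$-dependence sits in the bounded aggregation part: the unbounded transport contribution is constant in $u$ and cancels in differences, leaving $D\mathcal{F}(u_{1})-D\mathcal{F}(u_{2})=2\,\widetilde{\mathcal{A}}(u_{1}-u_{2},\cdot)$, whose operator norm is at most $2C\|u_{1}-u_{2}\|_{1}$, so the derivative is in fact globally Lipschitz.

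I expect the main obstacle to be the $L^{1}$ boundedness of $\widetilde{\mathcal{A}}$, since a pointwise product of two $L^{1}$ functions need not lie in $L^{1}$. The structural features that rescue the estimate are precisely that $k_{a}\in W^{1,\infty}\subset L^{\infty}$ by $(\mathbf{A}2)$ and that the gain term is a \emph{convolution} (hence governed by Young's inequality) while the loss term is a product of $p$ with a \emph{bounded} linear functional of $q$; without the boundedness of $k_{a}$ the bilinear form would fail to map $L^{1}\times L^{1}$ into $L^{1}$.
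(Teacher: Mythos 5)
Your proposal is correct and follows essentially the same route as the paper: the linear parts contribute a constant (base-point-independent) derivative, and the quadratic aggregation term is handled by the same two estimates the paper uses (Young's convolution inequality for the gain term, the $L^{\infty}$ bound on $k_{a}$ for the loss term), arriving at the same Lipschitz bound $\left\Vert D\mathcal{F}(u_{1})-D\mathcal{F}(u_{2})\right\Vert _{1}\le3\left\Vert k_{a}\right\Vert _{\infty}\left\Vert u_{1}-u_{2}\right\Vert _{1}$. Your polarization into a bounded symmetric bilinear form is just a cleaner packaging of the paper's direct computation, and your explicit verification that the remainder $\widetilde{\mathcal{A}}(h,h)$ is quadratic in $\left\Vert h\right\Vert _{1}$ is a small step the paper leaves implicit.
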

\begin{proof}
The Fréchet derivative of the nonlinear operator $\mathcal{F}$ is
given explicitly as

\begin{alignat*}{1}
D\mathcal{F}(\phi)[h(x)] & =-\partial_{x}[gh](x)-\left(\mu(x)+\frac{1}{2}k_{f}(x)\right)h(x)+\int_{x}^{x_{1}}\Gamma(x;\,y)k_{f}(y)h(y)\,dy\\
 & \quad+\frac{1}{2}\int_{0}^{x}k_{a}(x-y,\,y)\left[\phi(y)h(x-y)+h(y)\phi(x-y)\right]dy\\
 & \quad-h(x)\int_{0}^{x_{1}-x}k_{a}(x,\,y)\phi(y)dy-\phi(x)\int_{0}^{x_{1}-x}k_{a}(x,\,y)h(y)dy\,.
\end{alignat*}
For the arbitrary functions $u_{1},\,u_{2}\in L^{1}(I)$ we have 
\begin{alignat*}{1}
\left|D\mathcal{F}(u_{1})h(x)-D\mathcal{F}(u_{2})h(x)\right| & \le\frac{1}{2}\left\Vert k_{a}\right\Vert _{\infty}\int_{0}^{x}|u_{1}(y)-u_{2}(y)||h(x-y)|\,dy\\
 & \quad+\frac{1}{2}\left\Vert k_{a}\right\Vert _{\infty}\int_{0}^{x}|h(y)||u_{1}(x-y)-u_{2}(x-y)|\,dy\\
 & \quad+|h(x)|\left\Vert k_{a}\right\Vert _{\infty}\int_{0}^{x_{1}}|u_{1}(y)-u_{2}(y)|dy\\
 & \quad+|u_{1}(x)-u_{2}(x)|\left\Vert k_{a}\right\Vert _{\infty}\int_{0}^{x_{1}}|h(y)|dy
\end{alignat*}
Consequently, taking the integral of both sides with respect to $x$
and an application of Young's inequality for convolutions (see \citep[Theorem 2.24]{Adams2003})
to the first two integrals yields
\[
\left\Vert D\mathcal{F}(u_{1})h(x)-D\mathcal{F}(u_{2})h(x)\right\Vert _{1}\le\left\Vert k_{a}\right\Vert _{\infty}\left\Vert u_{1}-u_{2}\right\Vert _{1}\left\Vert h\right\Vert _{1}+\left\Vert k_{a}\right\Vert _{\infty}\left\Vert u_{1}-u_{2}\right\Vert _{1}\left\Vert h\right\Vert _{1}+\left\Vert u_{1}-u_{2}\right\Vert _{1}\left\Vert k_{a}\right\Vert _{\infty}\left\Vert h\right\Vert _{1}
\]
for all $h\in L^{1}(I)$. Then it follows that 
\[
\left\Vert D\mathcal{F}(u_{1})-D\mathcal{F}(u_{2})\right\Vert _{1}\le3\left\Vert k_{a}\right\Vert _{\infty}\left\Vert u_{1}-u_{2}\right\Vert _{1}\,,
\]
which in turn implies that the nonlinear operator $\mathcal{F}$ is
continuously Fréchet differentiable on $L^{1}(I)$.
\end{proof}

In the previous section we have shown that the nonlinear operator
$\mathcal{F}$ (\ref{eq:Breakage}) has at least one non-trivial stationary
solution, $p_{*}$ (in addition to trivial zero stationary solution).
To derive stability results for this stationary solutions we first
linearize the equation (\ref{eq: agg and growth model}) around $p_{*}$.
A simple calculation yields that the Fréchet derivative of the nonlinear
operator $\mathcal{F}$ evaluated at a stationary solution $p_{*}$
(see Theorem \ref{thm:positive steady state existence}) is given
explicitly by

\begin{alignat}{1}
\mathcal{L}[h](x)=D\mathcal{F}(p_{*})[h](x) & =-\partial_{x}(g(x)h(x))-A(x)h(x)+\int_{x}^{x_{1}}\Gamma(x;\,y)k_{f}(y)h(y)\,dy\nonumber \\
 & \quad-\int_{0}^{x_{1}-x}E(x,\,y)h(y)\,dy+\int_{0}^{x}E(x-y,\,y)h(y)\,dy\,,\label{eq:linearized operator L}
\end{alignat}
where
\[
E(x,\,y)=k_{a}(x,\,y)p_{*}(x)
\]
and 
\[
A(x)=\frac{1}{2}k_{f}(x)+\mu(x)+\int_{0}^{x_{1}-x}E(y,\,x)\,dy\,.
\]

We first prove that the linear operator $\mathcal{L}$ is an infinitesimal
generator of a strongly continuous semigroup $\mathcal{\mathscr{T}}=\left(T(t)\right)_{t\ge0}$.
Consequently, we will prove two regularity results for the semigroup
$\mathscr{T}$ , which will prove useful in the spectral analysis
of the operator $\mathcal{L}$. Particularly, we will show that under
some conditions on the model ingredients the semigroup $\mathscr{T}$
is positive and eventually compact. The main implication of eventual
compactness is that the Spectral Mapping Theorem holds (see \citep{Engel2000})
for the semigroup $\mathscr{T}$,
\[
\sigma\left(T(t)\right)\backslash\{0\}=\exp\left(t\sigma(\mathcal{L})\right),\quad t\ge0\,.
\]
 Consequently, we will use the positivity of the semigroup $\mathscr{T}$
in Section \ref{sub:Linearized-stability}, where we employ the positive
perturbation method introduced in \citep{Farkas2010}.
\begin{lemma}
\label{lem:operator L generates C0}If we define the domain of the
linearized operator $\mathcal{L}$ as 
\begin{equation}
\mathcal{D}(\mathcal{L})=\left\{ \phi\in L^{1}(I)\,|\,(g\phi)'\in L^{1}(I),\,(g\phi)(0)=\mathcal{K}[\phi]\right\} \,,\label{eq:domain of L}
\end{equation}
then the operator \emph{$\mathcal{L}$} generates a $C_{0}$ semigroup
on $\mathcal{D}(\mathcal{L})$.\end{lemma}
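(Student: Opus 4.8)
The plan is to split $\mathcal{L}$ into its unbounded transport part and a bounded remainder, treat the transport part by hand, and then invoke the bounded perturbation theorem. Concretely, I would write $\mathcal{L}=\mathcal{L}_{0}+\mathcal{B}$, where
\[
\mathcal{L}_{0}[h](x):=-\partial_{x}(g(x)h(x))-A(x)h(x),\qquad\mathcal{D}(\mathcal{L}_{0}):=\mathcal{D}(\mathcal{L}),
\]
carries the differential operator together with the nonlocal boundary condition encoded in (\ref{eq:domain of L}), and
\[
\mathcal{B}[h](x):=\int_{x}^{x_{1}}\Gamma(x;\,y)k_{f}(y)h(y)\,dy-\int_{0}^{x_{1}-x}E(x,\,y)h(y)\,dy+\int_{0}^{x}E(x-y,\,y)h(y)\,dy
\]
collects the three integral terms of (\ref{eq:linearized operator L}). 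Since a bounded linear perturbation of the generator of a $C_{0}$ semigroup is again such a generator \citep{Engel2000}, it suffices to show that $\mathcal{B}$ is a bounded operator on $L^{1}(I)$ and that $\mathcal{L}_{0}$ generates a $C_{0}$ semigroup on $L^{1}(I)$.

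Boundedness of $\mathcal{B}$ follows exactly as in the proof of the previous lemma: the kernels $E=k_{a}p_{*}$ and $\Gamma k_{f}$ are essentially bounded by Assumptions $(\mathbf{A}2)$, $(\mathbf{A}5)$, $(\mathbf{A}6)$ and $p_{*}\in C(I)$, so taking $L^{1}$ norms and applying Young's inequality for convolutions to the two convolution-type integrals gives $\left\Vert \mathcal{B}h\right\Vert _{1}\le C\left\Vert h\right\Vert _{1}$ with $C$ depending only on $\left\Vert k_{a}\right\Vert _{\infty}$, $\left\Vert k_{f}\right\Vert _{\infty}$, $\left\Vert \Gamma\right\Vert _{\infty}$ and $\left\Vert p_{*}\right\Vert _{u}$. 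I also note that $A\in C(I)$ with $A\ge0$, by $(\mathbf{A}3)$, $(\mathbf{A}5)$ and the boundedness of $E$.

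For the principal part I would apply the Lumer--Phillips theorem \citep{Engel2000}. The domain $\mathcal{D}(\mathcal{L}_{0})$ contains all $C^{1}$ functions satisfying $(g\phi)(0)=\mathcal{K}[\phi]$ and is therefore dense in $L^{1}(I)$. To check dissipativity I pair $\mathcal{L}_{0}h$ with $\operatorname{sgn}h$: since $g>0$ one has $\operatorname{sgn}h=\operatorname{sgn}(gh)$, so that
\[
\int_{0}^{x_{1}}-\partial_{x}(gh)\,\operatorname{sgn}(h)\,dx=-\int_{0}^{x_{1}}\partial_{x}|gh|\,dx=|(gh)(0)|-|(gh)(x_{1})|\le|\mathcal{K}[h]|\le\left\Vert q\right\Vert _{\infty}\left\Vert h\right\Vert _{1},
\]
using the renewal condition (\ref{eq:boundary condition}), while $-\int_{0}^{x_{1}}A|h|\,dx\le0$ because $A\ge0$. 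Hence $\mathcal{L}_{0}-\left\Vert q\right\Vert _{\infty}I$ is dissipative. It remains to verify the range condition, i.e. that $\lambda-\mathcal{L}_{0}$ is surjective for some $\lambda>\left\Vert q\right\Vert _{\infty}$. Given $f\in L^{1}(I)$, I set $u=gh$ and rewrite $(\lambda-\mathcal{L}_{0})h=f$ as the first-order ODE $u'+\tfrac{\lambda+A}{g}u=f$ subject to $u(0)=\int_{0}^{x_{1}}\tfrac{q(y)}{g(y)}u(y)\,dy$. With the integrating factor $\pi(x)=\exp\bigl(\int_{0}^{x}\tfrac{\lambda+A(s)}{g(s)}\,ds\bigr)$, variation of parameters expresses $u$ in terms of the unknown $u(0)$, and substitution into the boundary relation reduces the problem to the scalar equation
\[
u(0)\Bigl(1-\int_{0}^{x_{1}}\frac{q(y)}{g(y)\pi(y)}\,dy\Bigr)=\int_{0}^{x_{1}}\frac{q(y)}{g(y)\pi(y)}\int_{0}^{y}\pi(s)f(s)\,ds\,dy.
\]
Since $\pi(y)\ge e^{\lambda\int_{0}^{y}ds/g(s)}\to\infty$ for each $y>0$ as $\lambda\to\infty$, the bracketed factor tends to $1$; thus for $\lambda$ large it is bounded away from zero, $u(0)$ is uniquely determined, and $h=u/g\in\mathcal{D}(\mathcal{L}_{0})$ solves the resolvent equation. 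Lumer--Phillips then yields that $\mathcal{L}_{0}$ generates a $C_{0}$ semigroup, and adding the bounded $\mathcal{B}$ preserves this.

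The genuinely delicate point is the nonlocal renewal boundary condition: it is what forbids representing the transport semigroup as a plain translation along characteristics and forces the scalar solvability condition above. The two things to watch are (i) that this condition holds, i.e. the boundary integral $\int_{0}^{x_{1}}q/(g\pi)$ is strictly less than $1$, which I secure by taking $\lambda$ large, and (ii) that the boundary term contributes precisely the constant $\left\Vert q\right\Vert _{\infty}$ to the dissipativity estimate, so that one must work with the shifted operator $\mathcal{L}_{0}-\left\Vert q\right\Vert _{\infty}I$ rather than $\mathcal{L}_{0}$ itself. Everything else — density of the domain, boundedness of $\mathcal{B}$, and closedness (which comes for free once the resolvent is exhibited) — is routine.
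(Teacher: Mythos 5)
Your proposal is correct and follows the same high-level strategy as the paper: you split $\mathcal{L}$ into the unbounded transport-plus-multiplication part $\mathcal{L}_{0}$ (the paper's $\mathcal{L}_{1}$) and the bounded integral remainder, and then invoke the bounded perturbation theorem. The only real divergence is how the generation property of the transport part is established. The paper simply cites Lemma~2.4 of Banasiak and Lamb \citep{Lamb2009} for the fact that $\mathcal{L}_{1}$ with the nonlocal renewal boundary condition generates a $C_{0}$ semigroup, whereas you prove it from scratch via Lumer--Phillips: the sign-pairing argument giving dissipativity of $\mathcal{L}_{0}-\left\Vert q\right\Vert _{\infty}I$, and the explicit resolvent construction in which the boundary condition reduces to the scalar solvability condition $1-\int_{0}^{x_{1}}q/(g\pi)\ne0$, secured for large $\lambda$. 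Your computation is sound (the identity $\operatorname{sgn}(h)\,\partial_{x}(gh)=\partial_{x}|gh|$ holds a.e.\ for $gh\in W^{1,1}$ since $g>0$, and dominated convergence gives the limit of the boundary integral), and it is essentially the argument the paper itself carries out later, in the dissipativity lemma for the operator $\mathcal{M}$ in Section~\ref{sec:zero Linearized-stability-results}, there with the sharper shift $-\alpha$ under the extra hypothesis \eqref{eq:condition for stability}. What the citation buys the paper is brevity; what your explicit route buys is a self-contained proof, a concrete quasicontractive bound $\left\Vert T(t)\right\Vert \le e^{\left\Vert q\right\Vert _{\infty}t}$ for the unperturbed semigroup, and transparency about exactly where the nonlocal boundary functional $\mathcal{K}$ enters (both in the dissipativity constant and in the range condition). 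No gaps.
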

\begin{proof}
The linear operator $\mathcal{L}$ can be written as the sum of an
unbounded operator 
\begin{equation}
\mathcal{L}_{1}[h](x)=-\partial_{x}(g(x)h(x))-A(x)h(x)\label{eq:operator L1}
\end{equation}
and bounded operators
\begin{equation}
\mathcal{L}_{2}[h](x)=\int_{x}^{x_{1}}\Gamma(x;\,y)k_{f}(y)h(y)\,dy-\int_{0}^{x_{1}-x}E(x,\,y)h(y)\,dy,\qquad\mathcal{L}_{3}[h](x)=\int_{0}^{x}E(x-y,\,y)h(y)\,dy\,.\label{eq:operators L2 and L3}
\end{equation}
From the fact that $g(x),\,A(x)\in C(I)$ and from the Lemma 2.4 of
\citep{Lamb2009} it follows that $\mathcal{L}_{1}$ generates a $C_{0}$
semigroup on $\mathcal{D}(\mathcal{L})$. Consequently, the bounded
perturbation theorem of \citep[\S 3, Theorem 1.1]{Pazy1992} yields
that the operator $\mathcal{L}$ is also an infinitesimal generator
of a $C_{0}$ semigroup.\end{proof}
\begin{lemma}
\label{lem:L2 is compact}For a given stationary solution $p_{*}\in C(I)$
the operators $\mathcal{L}_{2}\,:\,\mathcal{D}(\mathcal{L})\to L^{1}(I)$
and $\mathcal{L}_{3}\,:\,\mathcal{D}(\mathcal{L})\to L^{1}(I)$ defined
in (\ref{eq:operators L2 and L3}) are compact operators. \end{lemma}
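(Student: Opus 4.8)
The plan is to show that $\mathcal{L}_{2}$ and $\mathcal{L}_{3}$ are compact as bounded operators on all of $L^{1}(I)$; since $\mathcal{D}(\mathcal{L})\subset L^{1}(I)$ and each operator is already known to be bounded on $L^{1}(I)$ (Lemma \ref{lem:operator L generates C0}), compactness on $L^{1}(I)$ immediately yields compactness of the restrictions to $\mathcal{D}(\mathcal{L})$. Writing each operator as an integral operator $Th(x)=\int_{I}K(x,y)h(y)\,dy$, the relevant kernels are $K_{2}(x,y)=\Gamma(x;y)k_{f}(y)\chi_{\{x<y\}}-k_{a}(x,y)p_{*}(x)\chi_{\{x+y<x_{1}\}}$ for $\mathcal{L}_{2}$ and $K_{3}(x,y)=k_{a}(x-y,y)p_{*}(x-y)\chi_{\{y<x\}}$ for $\mathcal{L}_{3}$. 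By $(\mathbf{A}2)$, $(\mathbf{A}5)$, $(\mathbf{A}6)$ and $p_{*}\in C(I)$, each prefactor is bounded on the compact set $I$, so both kernels are bounded.

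I would establish compactness via the Kolmogorov--Riesz (Fr\'echet--Kolmogorov) criterion for relative compactness in $L^{1}(I)$: after extending functions by zero outside $I$, the bounded family $\{Th:\|h\|_{1}\le1\}$ is relatively compact provided its translates are equicontinuous in the mean, i.e.\ $\sup_{\|h\|_{1}\le1}\|\tau_{\delta}(Th)-Th\|_{1}\to0$ as $\delta\to0$. An application of Tonelli's theorem reduces this to controlling the kernel modulus $\omega_{K}(\delta):=\sup_{y\in I}\int_{I}|K(x+\delta,y)-K(x,y)|\,dx$, since $\|\tau_{\delta}(Th)-Th\|_{1}\le\|h\|_{1}\,\omega_{K}(\delta)$. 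Thus it suffices to prove $\omega_{K_{2}}(\delta)\to0$ and $\omega_{K_{3}}(\delta)\to0$ as $\delta\to0$.

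I would split $\omega_{K}(\delta)$ into two contributions. First, the variation of the smooth prefactors: $\Gamma(\cdot;y)$ is Lipschitz in its first argument, $k_{a}$ is Lipschitz on $I\times I$, and $p_{*}$ is uniformly continuous on $I$, so the part of $K(x+\delta,y)-K(x,y)$ arising from replacing $x$ by $x+\delta$ inside these factors is $o(1)$ uniformly in $y$ after integration in $x$. Second, the movement of the $x$-dependent domain boundaries encoded by $\chi_{\{x<y\}}$, $\chi_{\{x+y<x_{1}\}}$ and $\chi_{\{y<x\}}$: translating $x$ by $\delta$ shifts each boundary line by $\delta$, so the indicator changes only on an $x$-set of measure at most $\delta$, on which the bounded prefactor contributes at most a constant multiple of $\delta$. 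Summing the two contributions gives $\omega_{K}(\delta)=O(\delta)\to0$ for each kernel, completing the argument.

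The main obstacle, and the reason a direct Arzel\`a--Ascoli argument in $C(I)$ (as was used for $\Phi$ in Theorem \ref{thm:positive steady state existence}) fails here, is precisely these characteristic functions. They render the kernels discontinuous along the lines $y=x$ and $x+y=x_{1}$, and in fact the image of the $L^{1}$-unit ball is \emph{not} equicontinuous in the uniform norm: a sharply peaked $h$ makes the domain-boundary term large pointwise even for arbitrarily small shifts. The resolution is that this effect is confined, in the $x$-variable, to a set of measure $O(\delta)$, and is therefore invisible to the $L^{1}$ modulus of continuity. Working in $L^{1}$ via Kolmogorov--Riesz, rather than in $C(I)$ via Arzel\`a--Ascoli, is the essential point; the rest is the routine estimation of $\omega_{K}(\delta)$ sketched above.
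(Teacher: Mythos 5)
Your argument is correct, but it takes a genuinely different route from the paper. The paper's proof computes $\partial_{x}\mathcal{L}_{2}[h]$ explicitly (picking up the boundary terms $k_{a}(x,x_{1}-x)p_{*}(x)h(x_{1}-x)$ and $-\Gamma(x;x)k_{f}(x)h(x)$ from the variable limits of integration), observes that this derivative is bounded in $L^{1}$ uniformly over the unit ball, so that $\mathcal{L}_{2}[B]\subset W^{1,1}(I)$ is bounded, and then invokes the Rellich--Kondrachov compact embedding $W^{1,1}(I)\hookrightarrow L^{1}(I)$; the same is asserted for $\mathcal{L}_{3}$. You instead verify the Kolmogorov--Riesz criterion directly by estimating the $L^{1}$-modulus of continuity of the translated kernels, splitting into the smooth-prefactor contribution and the measure-$O(\delta)$ sets where the indicators $\chi_{\{x<y\}}$, $\chi_{\{x+y<x_{1}\}}$, $\chi_{\{y<x\}}$ flip. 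Both proofs are sound, and both share the same implicit strengthening of $(\mathbf{A}6)$ to uniformity of the bounds on $\Gamma(\cdot;y)$ over $y$. Your route buys something concrete: it never differentiates the kernel, so it genuinely only needs $p_{*}\in C(I)$ as the lemma hypothesizes, whereas the paper's formula for $\partial_{x}(k_{a}(x,y)p_{*}(x))$ tacitly requires $p_{*}$ to be Lipschitz (this is in fact available for the stationary solutions of Theorem \ref{thm:positive steady state existence}, since $f_{*}=\Phi[f_{*}]\in C^{1}(I)$ and $g\in C^{1}(I)$ give $p_{*}=f_{*}/g\in C^{1}(I)$, but it is not part of the lemma's stated assumptions). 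Two cosmetic points: your final claim $\omega_{K}(\delta)=O(\delta)$ is slightly too strong since $p_{*}$ is only uniformly continuous, but $\omega_{K}(\delta)\to0$ is all you need; and your closing remark about Arzel\`a--Ascoli failing is somewhat beside the point, since the paper's compactness argument here is Sobolev-embedding based (Arzel\`a--Ascoli is used only earlier, for $\Phi$ on $C(I)$).
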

\begin{proof}
We first prove that the operator $\mathcal{L}_{2}$ is compact. Then
compactness of the operator $\mathcal{L}_{3}$ follows from analogous
arguments. Let us denote a unit ball centered at zero in $L^{1}(I)$
by $B=\left\{ \phi\in L^{1}(I)\,|\,\left\Vert \phi\right\Vert _{1}\le1\right\} $.
Recall that an operator is compact if it maps a unit ball into a relatively
compact set. Consequently, observe that the assumptions ($\mathbf{A}2$)
and $(\mathbf{A}6)$ together imply that the operator 
\begin{alignat*}{1}
\partial_{x}\mathcal{L}_{2}[h](x) & =k_{a}(x,\,x_{1}-x)p_{*}(x)h(x_{1}-x)-\int_{0}^{x_{1}-x}\partial_{x}(k_{a}(x,\,y)p_{*}(x))h(y)\,dy\\
 & \quad+\int_{x}^{x_{1}}\partial_{x}\Gamma(x;\,y)k_{f}(y)h(y)\,dy-\Gamma(x;\,x)k_{f}(x)h(x)
\end{alignat*}
is also bounded. Hence $\mathcal{L}_{2}[B]\subset W^{1,1}(I)$ and
from the Rellich-Kondrachov embedding theorem (see \emph{\citep[Theorem 6.3]{Adams2003}}
for a statement of the theorem) it follows that the set $\mathcal{L}_{2}[B]$
is relatively compact.\end{proof}
\begin{lemma}
\label{lem:eventually compact} The operator $\mathcal{L}$ defined
in (\ref{eq:linearized operator L}) generates an eventually compact
$C_{0}$ semigroup. And thus, the spectrum of the operator $\mathcal{L}$
consists of isolated eigenvalues of a finite multiplicity only, i.e.,
$\sigma(\mathcal{L})=\sigma_{D}(\mathcal{L})$.\end{lemma}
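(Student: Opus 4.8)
The plan is to decompose $\mathcal{L}$ as $\mathcal{L} = \mathcal{L}_1 + \mathcal{L}_2 + \mathcal{L}_3$ exactly as in Lemma \ref{lem:operator L generates C0}, and to exploit the fact, established in Lemma \ref{lem:L2 is compact}, that the perturbations $\mathcal{L}_2$ and $\mathcal{L}_3$ are compact. The strategy rests on the standard principle that eventual compactness of a $C_0$ semigroup is stable under compact perturbations whenever the difference of the two semigroups is itself compact for each $t$. So the first step is to show that the semigroup $\left(S(t)\right)_{t\ge 0}$ generated by the transport-with-decay operator $\mathcal{L}_1$ in \eqref{eq:operator L1} is eventually compact. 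Because $g(x) > 0$ on all of $I$ by $(\mathbf{A}1)$ and $I = [0,\,x_1]$ is a bounded interval, the characteristic curves $\dot{x} = g(x)$ traverse $I$ in a finite time $\tau := \int_0^{x_1} \frac{1}{g(y)}\,dy < \infty$. The key observation is that after time $\tau$ every characteristic entering through the boundary $x = 0$ has already exited through $x = x_1$, so $S(t)$ for $t > \tau$ has no contribution from the pure initial data and is determined entirely by the boundary renewal term $\mathcal{K}$; this smoothing-in-finite-time behavior is what produces eventual compactness of $S(t)$.

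Concretely, I would write $S(t)$ explicitly along characteristics using the representation from Lemma 2.4 of \citep{Lamb2009}. For $t > \tau$ the solution $(S(t)\phi)(x)$ at any $x \in I$ is traced back along the characteristic to the boundary $x = 0$ at an earlier time, and the value there is supplied by $\mathcal{K}[S(\cdot)\phi]$, an integral functional of the whole profile weighted by $q/g$. Substituting this boundary value yields that $S(t)\phi$ is, for $t > \tau$, an integral operator with a kernel that is continuous (indeed $W^{1,1}$ in $x$, using $g \in C^1(I)$, $A \in C(I)$, and $q \in L^\infty(I)$) applied to $\phi$. An integral operator with such a kernel maps the unit ball of $L^1(I)$ into a bounded subset of $W^{1,1}(I)$, which by the Rellich--Kondrachov embedding \citep[Theorem 6.3]{Adams2003} is relatively compact in $L^1(I)$; hence $S(t)$ is compact for every $t > \tau$, i.e.\ $\left(S(t)\right)$ is eventually compact.

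With eventual compactness of $\left(S(t)\right)$ in hand, the full semigroup $T(t)$ generated by $\mathcal{L} = \mathcal{L}_1 + (\mathcal{L}_2 + \mathcal{L}_3)$ is obtained by a bounded (compact) perturbation. I would invoke the variation-of-constants / Dyson--Phillips series
\[
T(t)\phi = S(t)\phi + \int_0^t S(t-s)\,(\mathcal{L}_2 + \mathcal{L}_3)\,T(s)\phi\,ds\,,
\]
and argue that the difference $T(t) - S(t)$ is compact for each $t > 0$: since $\mathcal{L}_2 + \mathcal{L}_3$ is compact by Lemma \ref{lem:L2 is compact} and $S(t-s)$, $T(s)$ are bounded with $S$ strongly continuous, the integrand is a norm-continuous family of compact operators, so the Bochner integral is compact. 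A semigroup that is eventually compact modulo a perturbation that is compact for each $t$ is itself eventually compact; thus $T(t)$ is compact for $t > \tau$. The final clause about the spectrum then follows from the classical Riesz--Schauder theory for compact operators: for $t > \tau$ the operator $T(t)$ is compact, so $\sigma(T(t)) \setminus \{0\}$ consists of isolated eigenvalues of finite multiplicity, and transporting this structure back through the Spectral Mapping Theorem for eventually compact semigroups \citep{Engel2000} gives $\sigma(\mathcal{L}) = \sigma_D(\mathcal{L})$.

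The main obstacle I anticipate is the first step: verifying carefully that $S(t)$ is genuinely compact for $t > \tau$ rather than merely smoothing. The subtlety is the boundary renewal condition $(g\phi)(0) = \mathcal{K}[\phi]$, which couples the value at $x = 0$ back to the entire profile; one must check that the resulting implicit relation for the boundary trace is well-defined (solvable for $t > \tau$) and that the reconstructed kernel genuinely lands in $W^{1,1}(I)$ uniformly over the unit ball, so that Rellich--Kondrachov applies. Handling this boundary feedback term rigorously — as opposed to the interior transport, which is routine — is where the real work lies.
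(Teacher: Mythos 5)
Your argument follows essentially the same route as the paper: the same decomposition $\mathcal{L}=\mathcal{L}_1+\mathcal{L}_2+\mathcal{L}_3$, eventual compactness of the semigroup generated by $\mathcal{L}_1$, compactness of $\mathcal{L}_2+\mathcal{L}_3$ fed into the perturbation expansion, and Riesz--Schauder theory plus the spectral mapping theorem for the final clause on $\sigma(\mathcal{L})$. The only real difference is that the paper simply cites \citet[Theorem 3.1]{Farkas2007} for the eventual compactness of the $\mathcal{L}_1$-semigroup, which holds for $t>2\int_0^{x_1}g(y)^{-1}\,dy$ rather than $t>\int_0^{x_1}g(y)^{-1}\,dy$ as you claim --- the boundary-feedback subtlety you correctly flag as ``the real work'' is precisely why the threshold doubles (for $\int_0^{x_1}g^{-1}<t<2\int_0^{x_1}g^{-1}$ the boundary trace still sees directly transported initial data), though this quantitative point does not affect the conclusion.
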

\begin{proof}
The operator $\mathcal{L}_{1}$ defined in (\ref{eq:operator L1})
is well-known operator in size-structured dynamics literature. If
$g\in C^{1}(I)$ and $A\in C(I)$, then in \citet[Theorem 3.1]{Farkas2007}
it has been shown that the $C_{0}$ semigroup generated by the operator
$\mathcal{L}_{1}$ is compact for $t>2\int_{0}^{x_{1}}\frac{1}{g(y)}\,dy$.
The condition $g\in C^{1}(I)$ follows from our main assumption ($\mathbf{A}1$),
and continuity of the function
\[
A(x)=\frac{1}{2}k_{f}(x)+\mu(x)+\int_{0}^{x_{1}-x}k_{a}(x,\,y)p_{*}(y)\,dy
\]
follows from the assumptions $(\mathbf{A}1$)- $(\mathbf{A}6$). Thus
the semigroup generated by $\mathcal{L}_{1}$ is eventually compact.
Conversely, in Lemma (\ref{lem:L2 is compact}) we have shown that
the operators $\mathcal{L}_{2}$ and $\mathcal{L}_{3}$ are compact.
Hence, the $C_{0}$ semigroup generated by the operator $\mathcal{L}=\mathcal{L}_{1}+\mathcal{L}_{2}+\mathcal{L}_{3}$
is also compact for $t>2\int_{0}^{x_{1}}\frac{1}{g(y)}\,dy$.

Therefore, the eventual compactness of the semigroup $\mathscr{T}$
(generated by $\mathcal{L}$) combined with Theorem 3.3 of \citep[\S 2.3]{Pazy1992}
and Corollary 1.19 of \citep[\S 4]{Engel2000} together imply that
the spectrum of $\mathcal{L}$ consists of isolated eigenvalues of
finite multiplicity.\end{proof}
\begin{lemma}
\label{lem:positve semigroup}For a given steady state solution $p_{*}$
let us choose the functions $k_{a}$, $k_{f}$ and $\Gamma$ such
that 
\begin{equation}
\partial_{x}\left(k_{a}(x,\,y)p_{*}(x)\right)\le0\text{ for all }x\in I\text{ and }y\in(0,\,x)\label{eq:positivity condition 1}
\end{equation}
and 
\begin{equation}
\Gamma(x;\,y)k_{f}(y)\ge k_{a}(x,\,y)p_{*}(x)\text{ for all }x\in I\text{ and }y\in[x,\,x_{1})\label{eq:positivity condition 2}
\end{equation}
 Then the operator $\mathcal{L}$ generates a positive $C_{0}$ semigroup. \end{lemma}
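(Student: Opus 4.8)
The plan is to exploit the decomposition $\mathcal{L}=\mathcal{L}_{1}+\mathcal{L}_{2}+\mathcal{L}_{3}$ from (\ref{eq:operator L1})--(\ref{eq:operators L2 and L3}) and to realize $\mathcal{L}$ as a bounded \emph{positive} perturbation of a generator of a positive semigroup. Recall from Lemma \ref{lem:operator L generates C0} that $\mathcal{L}_{1}=-\partial_{x}(gh)-Ah$ already generates a $C_{0}$ semigroup on $\mathcal{D}(\mathcal{L})$, and from Lemma \ref{lem:L2 is compact} that $\mathcal{L}_{2}$ and $\mathcal{L}_{3}$ are bounded. First I would argue that the semigroup generated by $\mathcal{L}_{1}$ is positive: $\mathcal{L}_{1}$ is a transport operator with the multiplication term $-A(x)$ and the nonlocal renewal boundary condition $(g\phi)(0)=\mathcal{K}[\phi]=\int_{0}^{x_{1}}q(y)\phi(y)\,dy$. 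Since $q\ge0$ by $(\mathbf{A}4)$ the boundary functional $\mathcal{K}$ is positive, and solving along characteristics (as in Lemma 2.4 of \citep{Lamb2009}) shows that nonnegative data produce nonnegative solutions; hence the $\mathcal{L}_{1}$-semigroup is positive regardless of the sign of $A$.

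The heart of the argument is to show that the bounded operator $\mathcal{B}:=\mathcal{L}_{2}+\mathcal{L}_{3}$ is a positive operator, for then the Dyson--Phillips series underlying the bounded perturbation theorem \citep[\S 3, Theorem 1.1]{Pazy1992} has only nonnegative terms, so the semigroup generated by $\mathcal{L}=\mathcal{L}_{1}+\mathcal{B}$ is positive. Writing $\mathcal{B}$ as a single integral operator $\mathcal{B}h(x)=\int_{0}^{x_{1}}\kappa(x,y)h(y)\,dy$ with
\[
\kappa(x,y)=\Gamma(x;y)k_{f}(y)\,\chi_{[x,\,x_{1}]}(y)-E(x,y)\,\chi_{[0,\,x_{1}-x]}(y)+E(x-y,y)\,\chi_{[0,\,x]}(y),
\]
it suffices to prove $\kappa(x,y)\ge0$ for almost every $(x,y)\in I\times I$.

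I would verify this by a case analysis on the four regions cut out by the indicators. When $y\ge x$ and $x+y\le x_{1}$ the kernel equals $\Gamma(x;y)k_{f}(y)-E(x,y)$, which is nonnegative \emph{precisely} by condition (\ref{eq:positivity condition 2}); when $y\ge x$ and $x+y>x_{1}$ the aggregation term is absent (indeed $E\equiv0$ there by $(\mathbf{A}2)$) and only the nonnegative fragmentation term survives. When $y<x$ and $x+y>x_{1}$ only the nonnegative term $E(x-y,y)$ remains. The remaining region, $y<x$ and $x+y\le x_{1}$, is where condition (\ref{eq:positivity condition 1}) enters: there $\kappa(x,y)=E(x-y,y)-E(x,y)$, and since (\ref{eq:positivity condition 1}) states that $E(\cdot,y)=k_{a}(\cdot,y)p_{*}(\cdot)$ is nonincreasing in its first argument, we obtain $E(x-y,y)\ge E(x,y)$, hence $\kappa\ge0$.

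This monotonicity step is the one I expect to require the most care, since it is the sole place condition (\ref{eq:positivity condition 1}) is used and one must check that the nonincreasing behaviour of $E(\cdot,y)$ is available on the full interval $[x-y,\,x]$ of first arguments (in particular when $x-y$ is small relative to $y$); the other three regions reduce immediately to nonnegativity of $\Gamma$, $k_{f}$, $k_{a}$ and $p_{*}$. Once $\kappa\ge0$ is established, positivity of $\mathcal{B}$ together with the positivity of the $\mathcal{L}_{1}$-semigroup and the perturbation series closes the proof.
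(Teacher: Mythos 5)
Your proof follows the same route as the paper's: positivity of the $\mathcal{L}_{1}$-semigroup (the paper cites \citet[Theorem 3.3]{Farkas2007} instead of re-deriving it along characteristics), positivity of the bounded perturbation $\mathcal{L}_{2}+\mathcal{L}_{3}$ from exactly the two hypotheses, and preservation of positivity under bounded positive perturbation (the paper invokes \citep[\S 6, Corollary 1.11]{Engel2000} where you invoke the Dyson--Phillips series, which is the same mechanism). Your four-region kernel analysis is a more careful rendering of the paper's two-line estimate, and the one delicate point you flag --- that condition (\ref{eq:positivity condition 1}) only asserts monotonicity of $E(\cdot,y)$ for first arguments exceeding $y$, whereas the comparison $E(x-y,y)\ge E(x,y)$ needs it on all of $[x-y,\,x]$ when $x<2y$ --- is present but silently glossed over in the paper's proof as well.
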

\begin{proof}
In \citet[Theorem 3.3]{Farkas2007} it has been shown that the operator
$\mathcal{L}_{1}\,:\,\mathcal{D}(\mathcal{L})\to L^{1}(I)$ generates
a positive $C_{0}$ semigroup under the main assumptions ($\mathbf{A}1$)-($\mathbf{A}6$).
On the other hand, from the conditions (\ref{eq:positivity condition 1})
and (\ref{eq:positivity condition 2}) it follows that 
\begin{alignat*}{1}
\mathcal{L}_{2}[h](x)+\mathcal{L}_{3}[h](x) & \ge\int_{0}^{x}k_{a}(x-y,\,y)p_{*}(x-y)h(y)\,dy+\int_{x}^{x_{1}}\Gamma(x;\,y)k_{f}(y)h(y)\,dy\\
 & \quad-\int_{0}^{x_{1}}k_{a}(x,\,y)p_{*}(x)h(y)\,dy\\
 & \ge\int_{0}^{x}\left[k_{a}(x-y,\,y)p_{*}(x-y)-k_{a}(x,\,y)p_{*}(x)\right]h(y)\,dy\\
 & \quad+\int_{x}^{x_{1}}\left[\Gamma(x;\,y)k_{f}(y)-k_{a}(x,\,y)p_{*}(x)\right]h(y)\,dy\ge0\,,
\end{alignat*}
which in turn ensures that the operator $\mathcal{L}_{2}+\mathcal{L}_{3}$
is a positive operator. Since the positivity of a semigroup is invariant
under a bounded and positive perturbation of its generator (see \citep[\S 6, Corollary 1.11]{Engel2000}),
the result follows immediately.
\end{proof}
\begin{rem}\label{rem:growth bound in spectrum}\end{rem}Lemma \ref{lem:positve semigroup}
has very important consequence. Specifically, if the positivity conditions
(\ref{eq:positivity condition 1}) and (\ref{eq:positivity condition 2})
hold and the spectral bound $s(\mathcal{L})=\sup\left\{ Re\,\lambda\,|\,\lambda\in\sigma(\mathcal{L})\right\} $
is not equal to $-\infty$, then $s(\mathcal{L})$ belongs to the
spectrum $\sigma(\mathcal{L})$ \citet[\S Theorem 1.10]{Engel2000}.
Moreover, the positivity and eventual compactness of the semigroup
$\mathscr{T}$ together imply that the spectral bound $s(\mathcal{L})$
is one of the eigenvalues of $\mathcal{L}$ with finite multiplicity.

\section{\label{sec:zero Linearized-stability-results}Linearized stability
and instability criteria for the zero stationary solution}

In this section we will derive linearized stability results for the
zero stationary solution of the flocculation equation. In contrast
to non-trivial stationary solutions, zero stationary solution always
exists (provided that the well-posedness assumptions ($\mathbf{A}1$)-($\mathbf{A}6$)
hold true). As we have discussed in Section \ref{sec:Principle-of-linearized}
the stability of the steady states depends on the spectral properties
of the linear operator $\mathcal{L}$ defined in (\ref{eq:linearized operator L}).
We define the operator $\mathcal{M}$ as the linear operator $\mathcal{L}$
evaluated at the trivial stationary solution, $p_{*}\equiv0$
\begin{equation}
\mathcal{M}[h](x)=-\partial_{x}[gh](x)-\left(\mu(x)+\frac{1}{2}k_{f}(x)\right)h(x)+\int_{x}^{x_{1}}\Gamma(x;\,y)k_{f}(y)h(y)\,dy\,.\label{eq:operator M}
\end{equation}

The assumptions ($\mathbf{A}1$)-($\mathbf{A}6$) also ensure that
the regularity conditions of Section \ref{sec:Principle-of-linearized}
are all satisfied. Hence, the operator $\mathcal{M}$ generates a
positive, eventually compact and strongly continuous semigroup. By
Remark \ref{rem:growth bound in spectrum} we know that the spectral
bound $s(\mathcal{M})$ of the operator $\mathcal{M}$ is a dominant
eigenvalue of $\mathcal{M}$ with finite multiplicity. Then, by the
principle of linearized stability (Proposition \ref{prop:linearized stability}),
the stability of the zero stationary solution depends on the sign
of this dominant eigenvalue. Thus, in the subsequent two subsections
we derive conditions which guarantee positivity and negativity of
the spectral bound $s(\mathcal{M})$, respectively. For a more thorough
discussion of the stability of zero stationary solution we refer readers
to \citep{Mirzaev2015a}.

\subsection{\label{sub:Instability-of-the trivial }Instability of the trivial
stationary solution}

The operator $\mathcal{M}$ can be written as the sum of an unbounded
operator 
\[
\mathcal{M}_{1}[h](x)=-\partial_{x}[gh](x)-\left(\mu(x)+\frac{1}{2}k_{f}(x)\right)h(x)
\]
and a bounded operator 
\[
\mathcal{M}_{2}[h](x)=\int_{x}^{x_{1}}\Gamma(x;\,y)k_{f}(y)h(y)\,dy\,.
\]
In \citep{Farkas2007}, authors have shown that the operator $\mathcal{M}_{1}$
generates a positive, eventually compact semigroup. Moreover, authors
have shown that the spectral bound of \emph{$\mathcal{M}_{1}$} is
positive if 
\begin{equation}
\int_{0}^{x_{1}}\frac{q(x)}{g(x)}\exp\left(-\int_{0}^{x}\frac{\mu(s)+\frac{1}{2}k_{f}(s)}{g(s)}\,ds\right)\,dx>1\,.\label{eq:zero instability condition}
\end{equation}
On the other hand, we note that $\mathcal{M}_{2}$ is a positive operator.
Then, Corollary 1.11 of \citep[\S 6]{Engel2000} yields that the operator
$\mathcal{M}=\mathcal{M}_{1}+\mathcal{M}_{2}$ also generates a positive,
eventually compact semigroup. Furthermore, the following inequality
holds for spectral bound of $\mathcal{M}_{1}$ and $\mathcal{M}$,
\begin{equation}
s(\mathcal{M}_{1})\le s(\mathcal{M}_{1}+\mathcal{M}_{2})=s(\mathcal{M})\,.\label{eq:ineq for spectral bounds}
\end{equation}
Consequently, this implies that the operator $\mathcal{M}$ also has
a positive spectral bound provided that the condition (\ref{eq:zero instability condition})
is satisfied. At this point, in Proposition \ref{prop:linearized stability},
choosing $\lambda_{0}$ equal to the eigenvalue of $\mathcal{M}$
corresponding to $s(\mathcal{M})$ and using Lemma \ref{lem:eventually compact}
yields 
\[
\max\left\{ \omega_{1}(\mathcal{M}),\,\sup_{\lambda\in\sigma_{D}(\mathcal{M})\backslash\{\lambda_{0}\}}\text{Re}\,\lambda\right\} =\sup_{\lambda\in\sigma_{D}(\mathcal{M})\backslash\{\lambda_{0}\}}\text{Re}\,\lambda<\text{Re}\,\lambda_{0}\,.
\]
Then, the operator $\mathcal{M}$ satisfies all the conditions of
Proposition \ref{prop:linearized stability} and thus results of this
section can be summarized in the form of the following condition.

\begin{cond}Assume that the assumptions ($\mathbf{A}1$)-($\mathbf{A}6$)
hold true. Moreover, assume that 
\[
\int_{0}^{x_{1}}\frac{q(x)}{g(x)}\exp\left(-\int_{0}^{x}\frac{\mu(s)+\frac{1}{2}k_{f}(s)}{g(s)}\,ds\right)\,dx>1\,,
\]
then the zero stationary solution of the flocculation equation is
unstable.\end{cond}

\subsection{Stability of the trivial stationary solution}

In this section we will prove that under certain condition on model
parameters we can ensure that the spectral bound of $\mathcal{M}$
is strictly negative. Since the positivity arguments that we used
in the previous section cannot guarantee negativity of $s(\mathcal{M})$,
we use a direct approach to prove that growth bound of $\mathcal{M}$
is strictly negative, $\omega_{0}(\mathcal{M})<0$. To achieve our
goal we use the following version of the well-known Lumer-Philips
theorem (see for instance \citep[\S 2, Corollary 3.6]{Engel2000}
and \citep[Theorem 2.22]{Belleni-Morante1998a}).
\begin{theorem}
(Lumer-Philips) Let a linear operator $\mathcal{A}$ on a Banach space
($\mathcal{X}$, $\left\Vert \cdot\right\Vert $) the following are
equivalent:
\begin{enumerate}
\item $\mathcal{A}$ is closed, densely defined. Furthermore, $\mathcal{A}-\lambda I$
is surjective for some $\lambda>0$ (and hence for all $\lambda>0$)
and there exists a real number $\omega$ such that $\mathcal{A}-\omega I$
is dissipative, i.e.,
\[
\left\Vert f-\lambda(\mathcal{A}-\omega I)f\right\Vert \ge\left\Vert f\right\Vert \quad\text{for all }\lambda>0\text{ and }f\in\mathcal{D}(\mathcal{A})\,.
\]

\item Then, $\left(\mathcal{A},\,\mathcal{D}(\mathcal{A})\right)$ generates
a strongly continuous quasicontractive semigroup $\left(T(t)\right)_{t\ge0}$
satisfying 
\[
\left\Vert T(t)\right\Vert \le e^{\omega t}\quad\text{for }t\ge0\,.
\]

\end{enumerate}
\end{theorem}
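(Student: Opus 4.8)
The plan is to deduce this equivalence from the Hille--Yosida generation theorem, after first removing the shift by $\omega$. Setting $\mathcal{B}:=\mathcal{A}-\omega I$ with $\mathcal{D}(\mathcal{B})=\mathcal{D}(\mathcal{A})$, one observes that $\mathcal{A}$ generates a semigroup $(T(t))_{t\ge0}$ with $\|T(t)\|\le e^{\omega t}$ if and only if $\mathcal{B}$ generates a contraction semigroup $(S(t))_{t\ge0}$, related by $T(t)=e^{\omega t}S(t)$. The dissipativity hypothesis on $\mathcal{A}-\omega I$ is precisely dissipativity of $\mathcal{B}$, while the surjectivity of $\mathcal{A}-\lambda I$, which holds for all $\lambda>0$, gives surjectivity of $(\lambda-\omega)I-\mathcal{B}$ for any such $\lambda$; choosing $\lambda>\omega$ furnishes a value $\mu:=\lambda-\omega>0$ with $\mu I-\mathcal{B}$ onto. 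Thus the whole statement reduces to the case $\omega=0$, i.e., to the classical characterization of generators of contraction semigroups.

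For the forward implication (1) $\Rightarrow$ (2), I would first recast the dissipativity inequality in resolvent form: writing $\lambda=\mu^{-1}$ in $\|f-\lambda\mathcal{B}f\|\ge\|f\|$ and multiplying by $\mu$ yields $\|(\mu I-\mathcal{B})f\|\ge\mu\|f\|$ for all $\mu>0$ and $f\in\mathcal{D}(\mathcal{B})$, so each $\mu I-\mathcal{B}$ is injective with a bounded inverse of norm at most $\mu^{-1}$ on its range. The key step is then to upgrade the single surjectivity to surjectivity for every $\mu>0$: the set $\Lambda$ of such $\mu$ is nonempty, and the a priori estimate (which forces the range of $\mu I-\mathcal{B}$ to be closed once $\mathcal{B}$ is closed) together with the openness of the resolvent set makes $\Lambda$ both open and closed in the connected interval $(0,\infty)$, hence equal to it. This delivers $(0,\infty)\subset\rho(\mathcal{B})$ with $\|(\mu I-\mathcal{B})^{-1}\|\le\mu^{-1}$, which is exactly the Hille--Yosida resolvent condition for a contraction semigroup; Hille--Yosida then produces $(S(t))_{t\ge0}$, and undoing the shift gives $(T(t))_{t\ge0}$ with $\|T(t)\|\le e^{\omega t}$.

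For the converse (2) $\Rightarrow$ (1), I would invoke the standard structural properties of $C_0$-semigroups: a generator is automatically closed and densely defined, and the Laplace-transform formula $R(\lambda,\mathcal{A})=\int_0^\infty e^{-\lambda t}T(t)\,dt$ shows that every $\lambda>\omega$ lies in $\rho(\mathcal{A})$, so in particular $\mathcal{A}-\lambda I$ is surjective. Estimating that same integral with $\|T(t)\|\le e^{\omega t}$ gives $\|R(\lambda,\mathcal{A})\|\le(\lambda-\omega)^{-1}$ for $\lambda>\omega$, and rearranging this resolvent bound reproduces the dissipativity of $\mathcal{A}-\omega I$.

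The only genuinely delicate point, I expect, is the propagation of the range condition from a single $\mu$ to all of $(0,\infty)$ in the forward direction; it is precisely the dissipativity estimate that keeps the inverse uniformly bounded and the range closed, so that the open-and-closed argument does not degenerate. Everything else is bookkeeping around the exponential shift and a direct appeal to Hille--Yosida, which is why this classical result is customarily quoted rather than reproved.
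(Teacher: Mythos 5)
Your sketch is correct, and it is essentially the standard proof of the Lumer--Phillips theorem: reduce to the contraction case by the exponential shift $T(t)=e^{\omega t}S(t)$, convert dissipativity into the resolvent estimate $\left\Vert (\mu I-\mathcal{B})f\right\Vert \ge\mu\left\Vert f\right\Vert $, bootstrap the single range condition to all of $(0,\infty)$ by the open-and-closed argument (using closedness of $\mathcal{B}$ for closed range and the uniform resolvent bound for closedness of the set of admissible $\mu$), and then invoke Hille--Yosida; the converse follows from the Laplace-transform representation of the resolvent. Be aware, though, that the paper does not prove this statement at all --- it quotes it as a known result, citing Engel--Nagel and Belleni-Morante --- so there is no in-paper argument to compare against; your outline is precisely the one found in those references. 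The only point worth flagging is a mismatch between your (correct) argument and the statement as printed: the dissipativity of $\mathcal{A}-\omega I$ controls $\lambda I-\mathcal{A}$ only for $\lambda>\omega$, so the range condition one can actually propagate, and the one delivered by the converse direction, is surjectivity of $\mathcal{A}-\lambda I$ for all $\lambda>\omega$ rather than for all $\lambda>0$ as the parenthetical claims. This is a defect of the quoted statement, not of your proof, and it is harmless where the paper applies the theorem, since there $\omega=-\alpha<0$.
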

In the following lemma, we show the operator satisfies the first part
of the Lumer-Philips theorem. Particularly, we establish that there
exist a strictly negative real number $w<0$ such that the operator
$\mathcal{M}-\omega I$ is dissipative.
\begin{lemma}
Assume that the assumptions ($\mathbf{A}1$)-($\mathbf{A}6$) hold
true. Then the linear operator $\mathcal{M}$ defined in (\ref{eq:operator M})
is closed, densely defined operator on the Banach space $L^{1}(I)$,
and for sufficiently large $\lambda>0$ the operator $\mathcal{M}-\lambda I\,:\,\mathcal{D}(\mathcal{M})\mapsto L^{1}(I)$
is surjective. Furthermore, if
\begin{equation}
\mu(x)-q(x)-\frac{1}{2}k_{f}(x)>0\label{eq:condition for stability}
\end{equation}
for all $x\in I$, then there exists $\alpha>0$ such that the semigroup
$\left(T(t)\right)_{t\ge0}$ generated by $\mathcal{M}$ satisfies
the estimate 
\[
\left\Vert T(t)\right\Vert _{1}\le e^{-\alpha t}\text{ for all }t\ge0\,.
\]
\end{lemma}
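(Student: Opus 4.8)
The plan is to verify the three hypotheses in part (1) of the Lumer-Philips theorem for $\mathcal{M}$, taking the dissipativity constant to be $\omega=-\alpha$; the estimate $\left\Vert T(t)\right\Vert _{1}\le e^{-\alpha t}$ then follows immediately from the implication (1)$\Rightarrow$(2). The closedness, dense definition, and surjectivity of $\mathcal{M}-\lambda I$ for large $\lambda$ are essentially inherited from the differential part $\mathcal{M}_{1}$ of the splitting $\mathcal{M}=\mathcal{M}_{1}+\mathcal{M}_{2}$ introduced in Section \ref{sub:Instability-of-the trivial }. Indeed, by Lemma 2.4 of \citep{Lamb2009} (equivalently \citet[Theorem 3.1]{Farkas2007}) the operator $\mathcal{M}_{1}$ generates a $C_{0}$ semigroup, hence is closed and densely defined; since $\mathcal{M}_{2}$ is bounded, $\mathcal{M}$ shares the domain $\mathcal{D}(\mathcal{M}_{1})=\mathcal{D}(\mathcal{M})$, is closed, and is densely defined. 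For surjectivity I would write $\lambda I-\mathcal{M}=(\lambda I-\mathcal{M}_{1})[I-(\lambda I-\mathcal{M}_{1})^{-1}\mathcal{M}_{2}]$; for $\lambda$ large the Hille-Yosida bound on $(\lambda I-\mathcal{M}_{1})^{-1}$ makes $\left\Vert (\lambda I-\mathcal{M}_{1})^{-1}\mathcal{M}_{2}\right\Vert <1$, so the bracketed factor is invertible by a Neumann series and $\lambda I-\mathcal{M}$, hence $\mathcal{M}-\lambda I$, is onto.

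The heart of the argument is the $L^{1}$ dissipativity estimate, and this is where I expect the real work. Since for $f\in L^{1}(I)$ the functional $\sign(f)\in L^{\infty}(I)$ realizes the duality map, it suffices to show $\int_{I}\sign(f(x))\,\mathcal{M}[f](x)\,dx\le-\alpha\left\Vert f\right\Vert _{1}$ for every $f\in\mathcal{D}(\mathcal{M})$, and I would treat the three terms of (\ref{eq:operator M}) separately. Because $g>0$ one has $\sign(f)\,\partial_{x}(gf)=\partial_{x}|gf|$ a.e., so the transport term integrates to the boundary contribution $|g(0)f(0)|-|g(x_{1})f(x_{1})|$; discarding the nonpositive outflow term and using the renewal boundary condition $(gf)(0)=\int_{0}^{x_{1}}q(y)f(y)\,dy$ together with $q\ge0$ gives $|g(0)f(0)|\le\int_{0}^{x_{1}}q(y)|f(y)|\,dy$. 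The removal term contributes exactly $-\int_{I}(\mu+\tfrac{1}{2}k_{f})|f|\,dx$. For the fragmentation term I would bound $\sign(f(x))$ by $1$, interchange the order of integration by Tonelli over the region $0\le x\le y\le x_{1}$, and invoke the mass-conservation identity $\int_{0}^{y}\Gamma(z;\,y)\,dz=1$ to obtain $\int_{0}^{x_{1}}k_{f}(y)|f(y)|\,dy$. Adding the three contributions collapses the estimate to $\int_{I}(q+\tfrac{1}{2}k_{f}-\mu)|f|\,dx$.

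At this point condition (\ref{eq:condition for stability}) enters: since $\mu-q-\tfrac{1}{2}k_{f}>0$ on the compact interval $I$ with $\mu,k_{f}$ continuous and $q\in L^{\infty}$, there exists $\alpha>0$ with $q+\tfrac{1}{2}k_{f}-\mu\le-\alpha$ a.e., whence $\int_{I}\sign(f)\mathcal{M}[f]\,dx\le-\alpha\left\Vert f\right\Vert _{1}$; that is, $\mathcal{M}+\alpha I=\mathcal{M}-(-\alpha)I$ is dissipative. Taking $\omega=-\alpha$ in the Lumer-Philips theorem then yields the quasicontractive estimate $\left\Vert T(t)\right\Vert _{1}\le e^{-\alpha t}$. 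The main obstacle I anticipate is the rigorous handling of the sign-function/duality-map computation in $L^{1}$ -- in particular justifying $\sign(f)\,\partial_{x}(gf)=\partial_{x}|gf|$ and evaluating the boundary terms on the set where $f$ vanishes -- since everything else is either inherited from the known generation results for $\mathcal{M}_{1}$ or reduces to Tonelli together with the two structural identities $\int_{0}^{y}\Gamma(z;\,y)\,dz=1$ and the renewal boundary condition.
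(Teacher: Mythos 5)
Your proposal is correct and follows essentially the same route as the paper: both reduce the quasicontractivity estimate to Lumer--Philips, obtain closedness, density and surjectivity from the known generation result for the transport part plus bounded perturbation, and prove dissipativity of $\mathcal{M}+\alpha I$ by the same sign-function computation (partitioning $I$ by the sign of $f$, using the renewal boundary condition with $q\ge0$, discarding the outflow term at $x_{1}$, and applying Tonelli with $\int_{0}^{y}\Gamma(z;\,y)\,dz=1$ to collapse the fragmentation term). The only cosmetic difference is that you phrase dissipativity via the duality map $\int\sign(f)\,\mathcal{M}[f]\,dx\le-\alpha\left\Vert f\right\Vert _{1}$ while the paper verifies the equivalent resolvent inequality $\left\Vert f-\lambda(\mathcal{M}+\alpha I)f\right\Vert _{1}\ge\left\Vert f\right\Vert _{1}$ directly.
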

\begin{proof}
Since the operator $\mathcal{M}$ generates a strongly continuous
semigroup (Lemma \ref{lem:operator L generates C0}), the first argument
of the lemma is an immediate consequence of the Generation Theorem
of \citep[\S 2.3]{Engel2000}. We now prove that there exist $\alpha>0$
such that $\mathcal{M}+\alpha I$ is dissipative. For a given $f\in\mathcal{D}(\mathcal{M})=\mathcal{D}(\mathcal{L})$
and some $h\in H$ and $\lambda>0$ we have
\[
f-\lambda(\mathcal{M}+\alpha I)f=h\,.
\]
Consequently, multiplying both sides by the sign function of $f$
yields 
\begin{alignat}{1}
\left|f(x)\right| & =f(x)\sign\left(f(x)\right)\nonumber \\
 & =-\lambda\left[g(x)f(x)\right]'\sign\left(f(x)\right)\,dx-\lambda\left[\frac{1}{2}k_{f}(x)+\mu(x)\right]f(x)\sign\left(f(x)\right)\nonumber \\
 & \quad+\lambda\sign\left(f(x)\right)\int_{x}^{x_{1}}\Gamma(x;\,y)k_{f}(y)f(y)\,dy\,dx+\lambda\alpha f(x)+h(x)\sign\left(f(x)\right)\,,\label{eq:function with sign}
\end{alignat}
where function $\sign(f(x))$ is defined as usual with $\sign(0)=0$.
For a given $f\in\mathcal{D}(\mathcal{M})$ the set of points for
which $f$ does not vanish can be written as a finite union of disjoint
open sets $I_{j}=(a_{j},\,b_{j})$, i.e., $f(x)\ne0$ for all $x\in\cup_{j=1}^{n}I_{j}=(0,\,x_{1})$.
\footnote{See also \citep{BanksKappel1989} and \citep{Farkas2008b} for similar
partitioning in dissipativity proofs} On each interval $I_{j}$ the function $f$ can be either strictly
positive or strictly negative. Moreover, on the boundaries we have
$f(a_{j})=0$ and $f(b_{j})=0$ unless $a_{j}=0$ or $b_{j}=x_{1}$.
Then, integrating both sides of (\ref{eq:function with sign}) on
a given interval $I_{j}=(a_{j},\,b_{j})$ we have
\begin{flalign}
\int_{a_{j}}^{b_{j}}\left|f(x)\right|\,dx & \le-\lambda g(b_{j})\left|f(b_{j})\right|+\lambda g(a_{j})\left|f(a_{j})\right|-\lambda\int_{a_{j}}^{b_{j}}\left[-\alpha+\frac{1}{2}k_{f}(x)+\mu(x)\right]\left|f(x)\right|\,dx\nonumber \\
 & \quad+\lambda\int_{a_{j}}^{b_{j}}\int_{x}^{x_{1}}\Gamma(x;\,y)k_{f}(y)\left|f(y)\right|\,dy+\int_{a_{j}}^{b_{j}}\left|h(x)\right|\,dx\,.\label{eq:integral on open intervals}
\end{flalign}
Consequently, by summing (\ref{eq:integral on open intervals}) for
$j=1,\dots,n$ we get 
\begin{flalign*}
\int_{0}^{x_{1}}\left|f(x)\right|\,dx & \le-\lambda g(x_{1})\left|f(x_{1})\right|+\lambda g(0)\left|f(0)\right|-\lambda\int_{0}^{x_{1}}\left[-\alpha+\frac{1}{2}k_{f}(x)+\mu(x)\right]\left|f(x)\right|\,dx\\
 & \quad+\lambda\int_{0}^{x_{1}}\int_{x}^{x_{1}}\Gamma(x;\,y)k_{f}(y)\left|f(y)\right|\,dy\,dx+\int_{0}^{x_{1}}\left|h(x)\right|\,dx\\
 & \le-\lambda\int_{0}^{x_{1}}\left[-\alpha-q(x)+\frac{1}{2}k_{f}(x)+\mu(x)\right]\left|f(x)\right|\,dx\\
 & \quad+\lambda\int_{0}^{x_{1}}k_{f}(y)\left|f(y)\right|\underbrace{\int_{0}^{y}\Gamma(x;\,y)\,dx}_{=1}\,dy+\int_{0}^{x_{1}}\left|h(x)\right|\,dx\\
 & =-\lambda\int_{0}^{x_{1}}\left[-\alpha-q(x)-\frac{1}{2}k_{f}(x)+\mu(x)\right]\left|f(x)\right|\,dx+\int_{0}^{x_{1}}\left|h(x)\right|\,dx\,.
\end{flalign*}
Hence, provided that we have 
\begin{equation}
-\alpha-q(x)-\frac{1}{2}k_{f}(x)+\mu(x)>0\label{eq:conditions on fragmentations}
\end{equation}
for all $x\in I$, it follows that 
\[
\left\Vert f\right\Vert _{1}\le\left\Vert h\right\Vert _{1}=\left\Vert f-\lambda(\mathcal{M}+\alpha I)f\right\Vert _{1}\,.
\]
In fact, if (\ref{eq:condition for stability}) holds true, then there
exists $\alpha>0$ such that $\mathcal{M}+\alpha I$ is dissipative.
Consequently, the result follows immediately from the Lumer-Philips
theorem.
\end{proof}
As a direct consequence of Proposition \ref{prop:linearized stability}
and the above lemma, we summarize the results of this section in form
of the following condition.

\begin{cond}\label{Assume-that-for}Assume that the assumptions ($\mathbf{A}1$)-($\mathbf{A}6$)
hold true. Moreover, assume that 
\[
q(x)+\frac{1}{2}k_{f}(x)-\mu(x)<0
\]
for all $x\in I$, then the zero stationary solution of the flocculation
equation is locally exponentially stable.\end{cond}

\section{\label{sec:Linearized-stability-(instabilit}Linearized instability
and stability criteria for non-trivial steady states}

In this section we present linearized stability results for the non-trivial
stationary solution $p_{*}\ne0$. We first derive conditions for instability
(Section \ref{sub:Linearized-instability}) and then derive conditions
for linear stability (Section \ref{sub:Linearized-stability}).

\subsection{\label{sub:Linearized-instability}Linearized instability}

Recall that, from Proposition \ref{prop:linearized stability}, instability
of the non-trivial stationary solution depends on the spectral properties
of the operator $\mathcal{L}$. Specifically, the spectrum of $\mathcal{L}$
contains at least one point $\lambda_{0}\in\sigma(\mathcal{L})$ satisfying
the instability condition (\ref{eq: instability condition}). Towards
this end (as we did in Section \ref{sub:Instability-of-the trivial }),
we first show that the operator $\mathcal{L}$ has a positive spectral
radius. 
\begin{lemma}
\label{lem:positive specral radius lemma}Assume that the positivity
conditions (\ref{eq:positivity condition 1})-(\ref{eq:positivity condition 2})
hold. Moreover, if the model parameters satisfy the following condition
\begin{equation}
\int_{0}^{x_{1}}\frac{q(x)}{g(x)}\exp\left(-\int_{0}^{x}\frac{\mu(s)+\frac{1}{2}k_{f}(s)+\int_{0}^{x_{1}-s}k_{a}(s,\,y)p_{*}(y)\,dy}{g(s)}\,ds\right)\,dx>1\,,\label{eq:positve spectral radius for nontrivial}
\end{equation}
then the operator $\mathcal{L}$ has a positive spectral radius. \end{lemma}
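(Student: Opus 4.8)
The plan is to mirror the strategy used for the trivial equilibrium in Section \ref{sub:Instability-of-the trivial }, but carried out with the \emph{full} removal rate $A$ of the linearized operator. First I would recall the decomposition $\mathcal{L}=\mathcal{L}_1+\mathcal{L}_2+\mathcal{L}_3$ from Lemma \ref{lem:operator L generates C0}, with
\[
\mathcal{L}_1[h](x)=-\partial_x(g(x)h(x))-A(x)h(x),\qquad A(x)=\tfrac12 k_f(x)+\mu(x)+\int_0^{x_1-x}k_a(x,y)p_*(y)\,dy ,
\]
where I have used the symmetry $k_a(x,y)=k_a(y,x)$ from $(\mathbf{A}2)$ to rewrite the aggregation-loss term $\int_0^{x_1-x}E(y,x)\,dy$. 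The pivotal observation linking the hypothesis to $\mathcal{L}_1$ is that the exponent appearing in (\ref{eq:positve spectral radius for nontrivial}) is \emph{exactly} $-\int_0^x A(s)/g(s)\,ds$. In other words, (\ref{eq:positve spectral radius for nontrivial}) is precisely the Sinko--Streifer net-reproduction inequality for the transport operator $\mathcal{L}_1$ with removal rate $A$ and renewal boundary condition $(gh)(0)=\int_0^{x_1}q(y)h(y)\,dy$.

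With this identification in hand, I would invoke the spectral characterization of the transport operator from \citep{Farkas2007}, exactly as was done for $\mathcal{M}_1$ in the trivial case. Since $g\in C^1(I)$ by $(\mathbf{A}1)$ and $A\in C(I)$ by $(\mathbf{A}1)$--$(\mathbf{A}6)$, the operator $\mathcal{L}_1$ generates a positive, eventually compact $C_0$-semigroup, and its spectral bound $s(\mathcal{L}_1)$ is strictly positive whenever (\ref{eq:positve spectral radius for nontrivial}) holds. This reduces the lemma to transferring positivity of the spectral bound from $\mathcal{L}_1$ to the full operator $\mathcal{L}$.

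That transfer I would carry out via monotonicity of the spectral bound under positive bounded perturbations of generators of positive semigroups. By Lemma \ref{lem:positve semigroup}, the positivity conditions (\ref{eq:positivity condition 1})--(\ref{eq:positivity condition 2}) guarantee that $\mathcal{L}_2+\mathcal{L}_3$ is a positive operator (and bounded, by Lemma \ref{lem:L2 is compact}) and that $\mathcal{L}=\mathcal{L}_1+(\mathcal{L}_2+\mathcal{L}_3)$ generates a positive $C_0$-semigroup. Arguing exactly as for inequality (\ref{eq:ineq for spectral bounds}) and citing \citep[\S 6]{Engel2000}, I obtain
\[
0<s(\mathcal{L}_1)\le s(\mathcal{L}_1+\mathcal{L}_2+\mathcal{L}_3)=s(\mathcal{L}).
\]
Finally, by Lemma \ref{lem:eventually compact} the semigroup generated by $\mathcal{L}$ is eventually compact, so $\sigma(\mathcal{L})=\sigma_D(\mathcal{L})$, and by Remark \ref{rem:growth bound in spectrum} the positive number $s(\mathcal{L})$ is itself a dominant, real eigenvalue of $\mathcal{L}$. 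Thus $\mathcal{L}$ possesses a spectral point with positive real part, i.e.\ a positive spectral radius, which is the desired conclusion.

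The step I expect to demand the most care is the first one: verifying that the exponent in (\ref{eq:positve spectral radius for nontrivial}) coincides with $-\int_0^x A(s)/g(s)\,ds$. This hinges on correctly extracting the aggregation-loss contribution $\int_0^{x_1-x}k_a(x,y)p_*(y)\,dy$ inside $A$ and on the symmetry of $k_a$; once that bookkeeping is settled, everything else is a direct transcription of the positivity and spectral-monotonicity machinery already assembled for the trivial equilibrium.
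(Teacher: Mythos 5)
Your proposal is correct and follows essentially the same route as the paper: decompose $\mathcal{L}=\mathcal{L}_1+\mathcal{L}_2+\mathcal{L}_3$, apply the Farkas-type net-reproduction criterion to conclude $s(\mathcal{L}_1)>0$ (noting that the exponent in (\ref{eq:positve spectral radius for nontrivial}) is exactly $-\int_0^x A(s)/g(s)\,ds$), and transfer positivity to $s(\mathcal{L})$ via the monotonicity of the spectral bound under the positive perturbation $\mathcal{L}_2+\mathcal{L}_3$. The closing remarks about eventual compactness and $s(\mathcal{L})$ being a dominant eigenvalue go beyond what the lemma asserts (they are used later in the instability condition), but they are harmless.
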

\begin{proof}
Recall that the operator $\mathcal{L}$ can be written as the sum
of the operators $\mathcal{L}_{1}$, \emph{$\mathcal{L}_{2}$ }and
$\mathcal{L}_{3}$. Moreover, in Lemma \ref{lem:positve semigroup}
we have shown that the operator $\mathcal{L}_{1}$ generates a positive
semigroup and the positivity assumptions (\ref{eq:positivity condition 1})
and (\ref{eq:positivity condition 2}) ensure the positivity of the
operator $\mathcal{L}_{2}+\mathcal{L}_{3}$. Therefore, from Corollary
1.11 of \citep[\S 6]{Engel2000} it follows that the spectral radius
of $\mathcal{L}$ is always greater than the spectral radius of the
operator \emph{$\mathcal{L}_{1}$},\emph{ }i.e., 
\begin{equation}
s(\mathcal{L}_{1})\le s(\mathcal{L}_{1}+\mathcal{L}_{2}+\mathcal{L}_{3})=s(\mathcal{L})\,.\label{eq:instability inequality}
\end{equation}
Conversely, provided that the condition (\ref{eq:positve spectral radius for nontrivial})
holds, the arguments of \citep[Theorem 5.1]{Farkas2007} can be used
to show that the spectral radius of the operator $\mathcal{L}_{1}$
is strictly positive. This result, combined with the inequality (\ref{eq:instability inequality})
implies that the spectral radius of $\mathcal{L}$ is strictly positive. 
\end{proof}
We are now ready to present the main result of this section in the
form of the following condition.

\begin{cond}\label{thm:Instability condition for positive stationary solution}Under
the main assumptions ($\mathbf{A}1$)-($\mathbf{A}6$) and the positivity
conditions (\ref{eq:positivity condition 1})-(\ref{eq:positivity condition 2})
the non-trivial steady state solution of the nonlinear evolution equation
defined in (\ref{eq: agg and growth model}) is unstable if 
\begin{equation}
\int_{0}^{x_{1}}\frac{q(x)}{g(x)}\exp\left(-\int_{0}^{x}\frac{\mu(s)+\frac{1}{2}k_{f}(s)+\int_{0}^{x_{1}-s}k_{a}(s,\,y)p_{*}(y)\,dy}{g(s)}\,ds\right)\,dx>1\,.\label{eq: first instability condition}
\end{equation}
\end{cond}
\begin{proof}
Recall that from the proof of Lemma \ref{lem:positive specral radius lemma}
it follows that
\[
s(\mathcal{L}_{1})\le s(\mathcal{L})\,,
\]
where the operators $\mathcal{L}_{1}$ and $\mathcal{L}$ are defined
in (\ref{eq:linearized operator L}). Note that \citet{Farkas2007}
have shown that the operator $\mathcal{L}_{1}$ has a positive spectral
radius provided that the condition (\ref{eq: first instability condition})
holds true. Consequently, if the condition (\ref{eq: first instability condition})
holds true, it follows that the operator $\mathcal{L}$ has a positive
spectral radius. Then from Proposition \ref{lem:eventually compact}
and Remark \ref{rem:growth bound in spectrum} it follows that $s(\mathcal{L})\in\sigma_{D}(\mathcal{L})$.
Moreover, Proposition \ref{lem:eventually compact} together with
\citep[Remark 4.8]{Webb1985} imply that\emph{ $\alpha$}-growth bound
of $\mathcal{L}$ is equal to negative infinity, $\omega_{1}(\mathcal{L})=-\infty$.
Therefore, in Proposition \ref{prop:linearized stability}, choosing
$\lambda_{0}$ equal to the eigenvalue corresponding to $s(\mathcal{L})$
yields
\[
\max\left\{ \omega_{1}(\mathcal{L}),\,\sup_{\lambda\in\sigma_{D}(\mathcal{L})\backslash\{\lambda_{0}\}}\text{Re}\,\lambda\right\} =\sup_{\lambda\in\sigma_{D}(\mathcal{L})\backslash\{\lambda_{0}\}}\text{Re}\,\lambda<\text{Re}\,\lambda_{0}\,,
\]
and implies that the non-trivial stationary solution $p_{*}$ of the
nonlinear evolution equation defined in (\ref{eq: agg and growth model})
is unstable.
\end{proof}

\begin{rem}Let $S\subset L^{1}(I)$ be the set of non-trivial stationary
solutions and $S_{1}$ (subset of $S$) denote the set of non-trivial
stationary solutions existence of which guaranteed by Theorem \ref{thm:positive steady state existence}.
For a stationary solution $p_{*}\in S_{1}$ the modeling terms need
to satisfy the conditions ($\mathbf{C}1$) and ($\mathbf{C}2$). Consequently,
plugging $x=0$ into ($\mathbf{C}$2) yields the inequality 
\[
\int_{0}^{x_{1}}\frac{q(x)}{g(x)}\,dx\le1\,.
\]
Conversely, the instability condition (\ref{eq: first instability condition})
implies that 
\[
1<\int_{0}^{x_{1}}\frac{q(x)}{g(x)}\exp\left(-\int_{0}^{x}\frac{\mu(s)+\frac{1}{2}k_{f}(s)+\int_{0}^{x_{1}-s}k_{a}(s,\,y)p_{*}(y)\,dy}{g(s)}\,ds\right)\,dx\le\int_{0}^{x_{1}}\frac{q(x)}{g(x)}\,dx\,,
\]
which contradicts the existence condition ($\mathbf{C}$2). This in
turn implies that stationary solutions in the set $S_{1}$ do not
satisfy the instability condition. However, we note that $S_{1}$
is only subset of $S$, and thus the results of this subsection is
only valid for non-trivial stationary solutions in the set $S\backslash S_{1}$.\end{rem}

\subsection{\label{sub:Linearized-stability}Linearized stability}

In Section \ref{sub:Linearized-instability} we have shown that the
spectrum of the operator $\mathcal{L}$ is not empty. This result,
together with Proposition \ref{lem:positve semigroup} and Remark
\ref{rem:growth bound in spectrum} imply that the spectral radius
of $\mathcal{L}$ is one of the eigenvalues of the operator $\mathcal{L}$,
so it is sufficient to show that all the eigenvalues of $\mathcal{L}$
have a negative real part. However, to the best of our knowledge,
the eigenvalue problem 
\[
\mathcal{L}[\phi]=\lambda\phi
\]
does not have an explicit solution. This forces us to utilize the
positive perturbation method \citet{Farkas2010} to locate the dominant
eigenvalue of $\mathcal{L}$. This method relies on the fact that
compact perturbations do not change the essential spectrum of a semigroup.
Towards this end we will perturb the operator $\mathcal{L}=\mathcal{L}_{1}+\mathcal{L}_{2}+\mathcal{L}_{3}$
(the operators $\mathcal{L}_{1}$, $\mathcal{L}_{2}$ and $\mathcal{L}_{3}$
are defined in (\ref{eq:operator L1})-(\ref{eq:operators L2 and L3}))
by a positive compact operator so that we can identify the point spectrum
of the resulting operator. 
\begin{lemma}
\label{lem:Operator C is positive}Let us define the operator\emph{
$\mathcal{C}$ as} 
\[
\mathcal{C}[f](x)=c_{1}\int_{0}^{x_{1}}f(y)\,dy,\,
\]
where $c_{1}=\left\Vert k_{a}\cdot p_{*}\right\Vert _{\infty}+\left\Vert \Gamma\cdot k_{f}\right\Vert _{\infty}$.
Then the operator $\mathcal{C}-\mathcal{L}_{2}-\mathcal{L}_{3}$ is
positive and compact.\end{lemma}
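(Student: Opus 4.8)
The plan is to establish the two asserted properties---positivity and compactness---by separate elementary arguments, since the cone structure and the compactness do not interact here.

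For compactness, I would first note that $\mathcal{C}$ has one-dimensional range: because $\mathcal{C}[f](x)=c_{1}\int_{0}^{x_{1}}f(y)\,dy$ does not depend on $x$, its image lies in the span of the constant function $1\in L^{1}(I)$, so $\mathcal{C}$ is a rank-one, hence compact, operator. Lemma \ref{lem:L2 is compact} already provides compactness of $\mathcal{L}_{2}$ and $\mathcal{L}_{3}$, so $\mathcal{C}-\mathcal{L}_{2}-\mathcal{L}_{3}$ is compact as a finite linear combination of compact operators.

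For positivity, I would compute $(\mathcal{C}-\mathcal{L}_{2}-\mathcal{L}_{3})[f]$ directly for an arbitrary $f\ge0$. Recalling $E(x,y)=k_{a}(x,y)p_{*}(x)\ge0$ and the definitions of $\mathcal{L}_{2},\mathcal{L}_{3}$, this quantity equals
\[
c_{1}\int_{0}^{x_{1}}f(y)\,dy-\int_{x}^{x_{1}}\Gamma(x;y)k_{f}(y)f(y)\,dy+\int_{0}^{x_{1}-x}E(x,y)f(y)\,dy-\int_{0}^{x}E(x-y,y)f(y)\,dy.
\]
The third integral is non-negative and may be discarded. I would then bound the two negative integrals by $\int_{x}^{x_{1}}\Gamma k_{f}f\le\left\Vert \Gamma\cdot k_{f}\right\Vert _{\infty}\int_{0}^{x_{1}}f$ and $\int_{0}^{x}E(x-y,y)f\le\left\Vert k_{a}\cdot p_{*}\right\Vert _{\infty}\int_{0}^{x_{1}}f$, using $f\ge0$ to extend each domain of integration to all of $I$. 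Since $c_{1}=\left\Vert k_{a}\cdot p_{*}\right\Vert _{\infty}+\left\Vert \Gamma\cdot k_{f}\right\Vert _{\infty}$, the leading term $c_{1}\int_{0}^{x_{1}}f$ dominates the sum of the two negative contributions exactly, and the whole expression is therefore $\ge0$.

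I do not anticipate a substantive obstacle; the only delicate point is recognizing that $c_{1}$ has been defined precisely so that the constant leading term simultaneously absorbs both negative integrals, after which the positive cross-term $\int_{0}^{x_{1}-x}Ef$ can simply be thrown away rather than estimated. I would also record explicitly that the sign arguments rely on non-negativity of $k_{a}$, $k_{f}$, $\Gamma$ and of the stationary solution $p_{*}$ (the latter furnished by Theorem \ref{thm:positive steady state existence}), which is what guarantees $E\ge0$ and the correct sign of every integrand.
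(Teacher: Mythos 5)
Your proposal is correct and follows essentially the same route as the paper: compactness via the rank-one nature of $\mathcal{C}$ together with Lemma \ref{lem:L2 is compact}, and positivity via a direct pointwise estimate in which the non-negative term $\int_{0}^{x_{1}-x}E(x,y)f(y)\,dy$ is discarded and the two negative integrals are absorbed into $c_{1}\int_{0}^{x_{1}}f$. The only cosmetic difference is that you extend both negative integrals to all of $I$ before bounding, whereas the paper keeps them on their native subintervals $[0,x]$ and $[x,x_{1}]$; both computations are valid.
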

\begin{proof}
It is easy to see that\emph{ $\mathcal{C}-\mathcal{L}_{2}-\mathcal{L}_{3}$
is a positive operator, i.e.,} 
\begin{alignat*}{1}
\mathcal{C}[f](x)-\mathcal{L}_{2}[f](x)-\mathcal{L}_{3}[f](x) & \ge\int_{0}^{x}\left[\left\Vert k_{a}\cdot p_{*}\right\Vert _{\infty}-k_{a}(x-y,\,y)p_{*}(x-y)\right]f(y)\,dy\\
 & \quad+\int_{x}^{x_{1}}\left[\left\Vert \Gamma\cdot k_{f}\right\Vert _{\infty}-\Gamma(x;\,y)k_{f}(y)\right]f(y)\,dy\ge0\quad\forall f\in\left(L^{1}(I)\right)_{+}\,.
\end{alignat*}
Conversely, $\mathcal{C}$ is a bounded linear operator of rank one,
hence it is compact. Then the compactness of $\mathcal{C}-\mathcal{L}_{2}-\mathcal{L}_{3}$
follows from compactness of the operators $\mathcal{L}_{2}$ and $\mathcal{L}_{3}$
(see Lemma \ref{lem:L2 is compact}). 
\end{proof}

Now define the perturbed operator $\mathcal{P}$ as $\mathcal{P}:=\mathcal{L}+\mathcal{C}-\mathcal{L}_{2}-\mathcal{L}_{3}=\mathcal{L}_{1}+\mathcal{C}$.
Then the eigenvalue problem for the operator \emph{$\mathcal{P}$
}reads as
\begin{equation}
\lambda f-\mathcal{P}[f]=\lambda f-\mathcal{L}_{1}[f]-\mathcal{C}[f]=0\,.\label{eq:perturbed eigenvalue problem}
\end{equation}
This equation can be solved implicitly as
\begin{equation}
f(x)=U_{1}\frac{1}{T(\lambda,\,x)g(x)}+U_{2}\frac{c_{1}}{g(x)T(\lambda,\,x)}\int_{0}^{x}T(\lambda,\,s)\,ds\,,\label{eq:implicit solution}
\end{equation}
where 
\[
U_{1}=\int_{0}^{x_{1}}q(y)f(y)\,dy,\qquad U_{2}=\int_{0}^{x_{1}}f(y)\,dy
\]
and 
\[
T(\lambda,\,x)=\exp\left(\int_{0}^{x}\frac{\lambda+A(y)}{g(y)}\,dy\right)\,.
\]
Integrating the equation (\ref{eq:implicit solution}) on $I$ yields
one equation for solving for $U_{1}$ and $U_{2}$. Moreover, multiplying
the equation (\ref{eq:implicit solution}) by $q(x)$ and integrating
over the interval $I$ we obtain the second equation for solving for
$U_{1}$ and $U_{2}$. Consequently, these two equations can be summarized
in the following linear system, 
\begin{equation}
\begin{cases}
U_{1}A_{11}(\lambda)+U_{2}\left(A_{12}(\lambda)-1\right)=0\\
U_{1}(A_{21}(\lambda)-1)+U_{2}A_{22}(\lambda)=0
\end{cases}\,,\label{eq:the linear system}
\end{equation}
where
\begin{alignat*}{1}
A_{11}(\lambda)=\int_{0}^{x_{1}}\frac{1}{T(\lambda,\,x)g(x)}\,dx,\quad A_{12}(\lambda)=\int_{0}^{x_{1}}\frac{c_{1}}{g(x)T(\lambda,\,x)}\int_{0}^{x}T(\lambda,\,s)\,ds\,dx\,,\\
A_{21}(\lambda)=\int_{0}^{x_{1}}\frac{q(x)}{T(\lambda,\,x)g(x)}\,dx,\quad A_{22}(\lambda)=\int_{0}^{x_{1}}\frac{c_{1}q(x)}{g(x)T(\lambda,\,x)}\int_{0}^{x}T(\lambda,\,s)\,ds\,dx\,.
\end{alignat*}
If the eigenvalue problem (\ref{eq:perturbed eigenvalue problem})
has a non-zero solution, then there is non-zero vector $(U_{1},\,U_{2})$
satisfying the linear system (\ref{eq:the linear system}). On the
other hand, if there is non-zero vector $(U_{1},\,U_{2})$ satisfying
the linear system (\ref{eq:the linear system}), then the eigenvalue
problem has a non-zero solution. Hence $\lambda\in\mathbb{C}$ is
an eigenvalue value of the operator $\mathcal{P}$ if and only if
\begin{equation}
K(\lambda)=\det\left(\begin{array}{cc}
A_{11}(\lambda) & A_{12}(\lambda)-1\\
A_{21}(\lambda)-1 & A_{22}(\lambda)
\end{array}\right)=A_{11}(\lambda)A_{22}(\lambda)-\left(1-A_{12}(\lambda)\right)\left(1-A_{21}(\lambda)\right)=0\,.\label{eq:Characteristic function}
\end{equation}
In structured population dynamics the function $K$ is often referred
as a characteristic function of eigenvalues of an operator, and similar
characteristic functions have been derived in \citep{Equilibria1983,Farkas,Farkas2005,Farkas2007,Farkas2010}.
The main advantage of having the characteristic function $K$ is the
task of locating the dominant eigenvalue value of the operator $\mathcal{L}$
reduces to locating the roots of the function $K$. Hence, in the
following lemma we show that under certain conditions on the model
parameters all the roots of the characteristic function $K$ lie in
the left half of the complex plane. 
\begin{lemma}
\label{lem:negative root}Under the conditions
\begin{equation}
A_{12}(0)<1,\qquad A_{21}(0)<1\label{eq:stability condition}
\end{equation}
and 
\begin{equation}
K(0)<0\label{eq:stability condition K(0)}
\end{equation}
the function $K$ does not have any roots with non-negative real part.
Furthermore, the function $K$ has at least one negative real root. \end{lemma}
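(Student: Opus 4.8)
The plan is to treat $K$ as an explicit function of the complex variable $\lambda$ and to exploit two elementary facts about the building blocks $A_{ij}$. First, since $g$ and $A$ are real-valued, for $\lambda$ with $\text{Re}\,\lambda=a$ one has $|T(\lambda,x)|=\exp\left(\int_0^x \frac{a+A(y)}{g(y)}\,dy\right)=T(a,x)$, so that, estimating each integrand by its modulus and using $c_1,g>0$, every entry satisfies $|A_{ij}(\lambda)|\le A_{ij}(a)$ (for $A_{12},A_{22}$ one first bounds the inner factor $|T(\lambda,x)^{-1}\int_0^x T(\lambda,s)\,ds|\le T(a,x)^{-1}\int_0^x T(a,s)\,ds$). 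Second, restricted to real arguments each $A_{ij}(a)$ is a positive, continuous and strictly decreasing function of $a$, since increasing $a$ increases $T(a,\cdot)$ and hence decreases both $1/T(a,\cdot)$ and the ratios $T(a,s)/T(a,x)$; in particular $A_{11}(a),A_{22}(a)>0$ for every real $a$, while $A_{21}(a)\to\infty$ as $a\to-\infty$. I would record these monotonicity and modulus statements as the first step.

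For the absence of roots with $\text{Re}\,\lambda\ge 0$, I would first show $K(a)<0$ for all real $a\ge 0$. By (\ref{eq:stability condition}) and monotonicity, $A_{12}(a)\le A_{12}(0)<1$ and $A_{21}(a)\le A_{21}(0)<1$ on $[0,\infty)$, so $1-A_{12}(a)$ and $1-A_{21}(a)$ are positive and increasing there, while $A_{11}(a)A_{22}(a)$ is decreasing; hence $K(a)=A_{11}(a)A_{22}(a)-(1-A_{12}(a))(1-A_{21}(a))$ is strictly decreasing on $[0,\infty)$, and together with (\ref{eq:stability condition K(0)}) this yields $K(a)\le K(0)<0$ there. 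Now suppose $K(\lambda)=0$ with $a=\text{Re}\,\lambda\ge 0$, i.e. $A_{11}(\lambda)A_{22}(\lambda)=(1-A_{12}(\lambda))(1-A_{21}(\lambda))$. Taking moduli, the left side is at most $A_{11}(a)A_{22}(a)$; for the right side, since $|A_{12}(\lambda)|\le A_{12}(a)<1$ and likewise for $A_{21}$, the inequality $|1-z|\ge 1-|z|$ gives the lower bound $(1-A_{12}(a))(1-A_{21}(a))$ (all factors being positive). Comparing, $A_{11}(a)A_{22}(a)\ge(1-A_{12}(a))(1-A_{21}(a))$, that is $K(a)\ge 0$, contradicting $K(a)<0$.

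For the existence of a negative real root I would locate a point where $K$ is positive. Because $A_{21}$ is continuous, strictly decreasing, satisfies $A_{21}(0)<1$ by (\ref{eq:stability condition}), and tends to $+\infty$ as $a\to-\infty$, there is a unique $\bar a<0$ with $A_{21}(\bar a)=1$. At that point the second factor of the product in (\ref{eq:Characteristic function}) vanishes, so $K(\bar a)=A_{11}(\bar a)A_{22}(\bar a)>0$ by positivity of $A_{11},A_{22}$. Since $K$ is continuous on $\mathbb{R}$ and $K(0)<0$ by (\ref{eq:stability condition K(0)}), the intermediate value theorem produces a root of $K$ in $(\bar a,0)\subset(-\infty,0)$.

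The routine but essential part is justifying the two preliminary facts: differentiation under the integral sign for the monotonicity, and the passage from $|T(\lambda,x)|=T(a,x)$ to the entrywise bounds $|A_{ij}(\lambda)|\le A_{ij}(a)$, including the inner integral in $A_{12},A_{22}$. The only genuine subtlety I anticipate is the non-degeneracy needed in the third paragraph, namely $A_{21}(a)\to\infty$, which requires $q\not\equiv 0$; this is exactly the hypothesis making the renewal boundary condition nontrivial. In the degenerate case $q\equiv 0$ one has $A_{21}\equiv A_{22}\equiv 0$, whence $K(a)=A_{12}(a)-1$, which is positive for $a$ sufficiently negative (as $A_{12}(a)\to\infty$ when $c_1>0$), so the intermediate value theorem again delivers a negative root.
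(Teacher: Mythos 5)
Your proof is correct and follows the same overall strategy as the paper's: monotonicity of the entries $A_{ij}$ on the real axis, the modulus bound $|A_{ij}(\lambda)|\le A_{ij}(\text{Re}\,\lambda)$ coming from $|T(\lambda,x)|=T(\text{Re}\,\lambda,x)$ to exclude roots in the closed right half-plane, and the intermediate value theorem after locating a negative real point where one of the factors $1-A_{12}$, $1-A_{21}$ vanishes. The one place you diverge is that final step: the paper sends $\lambda\to-\infty$ in $1-A_{12}(\lambda)$ and finds $\lambda_{0}<0$ with $A_{12}(\lambda_{0})=1$, whereas you use $A_{21}(\bar a)=1$; both yield $K=A_{11}A_{22}\ge0$ at that point and then conclude by IVT. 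Your variant needs $q\not\equiv0$, which you correctly flag and handle, while the paper's needs $c_{1}>0$, which it does not flag; in the fully degenerate case $q\equiv0$ and $c_{1}=0$ one has $K\equiv-1$ and the existence claim fails, so some non-degeneracy is genuinely required either way. Two small polish points: $A_{11}(\bar a)A_{22}(\bar a)>0$ should be $\ge0$ unless you additionally assume $c_{1}>0$ and $q\not\equiv0$ (if the product vanishes, $\bar a$ is itself the desired negative root, so nothing is lost); and your handling of the complex-root case via $|1-z|\ge1-|z|$ is in fact cleaner than the paper's passage through real parts, which also contains a typo ($A_{11}A_{12}$ where $A_{11}A_{22}$ is meant).
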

\begin{proof}
It is straightforward to see that 
\[
A_{11}(\lambda)=A_{12}(\lambda)=A_{21}(\lambda)=A_{22}(\lambda)=0\text{ as }\lambda\to\infty\,,
\]
so 
\[
\lim_{\lambda\to\infty}K(\lambda)=-1\,.
\]
Moreover, observe that for $i=1,\,2$ and $j=1,\,2$ the functions
$A_{ij}\,:\,\mathbb{R}\to\mathbb{R}_{+}$ are non-increasing, i.e.,
\[
\partial_{\lambda}A_{ij}(\lambda)\le0\,.
\]
Consequently, for $\lambda\ge0$ from (\ref{eq:stability condition})
we have
\[
1-A_{12}(\lambda)\ge1-A_{12}(0)>0
\]
 and 
\[
1-A_{21}(\lambda)\ge1-A_{21}(0)>0\,.
\]
Conversely, differentiating $K(\lambda)$ for $\lambda\ge0$ yields
\[
K'=A'_{11}A_{22}+A_{11}A'_{22}+A'_{12}\underbrace{(1-A_{21})}_{>0}+A'_{21}\underbrace{(1-A_{12})}_{>0}\le0\,.
\]
Thus the function $K$ restricted to real numbers is non-increasing.
This in turn together with the condition (\ref{eq:stability condition K(0)})
implies that the function $K$ does not have any positive real root. 

Now for the sake of a contradiction, assume that there is $\lambda_{1}=a-b\mathrm{i}\in\mathbb{C}$
with $a\ge0$ and $b\ne0$ such that 
\begin{equation}
K(\lambda_{1})=0\,.\label{eq:Complex root}
\end{equation}
Let us define 
\[
G(x)=\int_{0}^{x}\frac{1}{g(y)}\,dy\,,
\]
then for $\lambda=a-b\mathrm{i}$ we have 
\[
A_{11}(\lambda_{1})=\int_{0}^{x_{1}}\frac{\cos\left[bG(x)\right]}{T(a,\,x)g(x)}\,dx+\mathrm{i}\int_{0}^{x_{1}}\frac{\sin\left[bG(x)\right]}{T(a,\,x)g(x)}\,dx\,.
\]
This in turn implies that 
\begin{equation}
-A_{11}(a)\le\text{Re }A_{11}(\lambda_{1})\le A_{11}(a)\,.\label{eq:inequalities for A_ij}
\end{equation}
Analogous arguments yields similar inequalities for $A_{12},\,A_{21}$
and $A_{22}$. On the other hand, if (\ref{eq:Complex root}) holds
true then using (\ref{eq:inequalities for A_ij}) it follows that
\begin{alignat}{1}
A_{11}(a)A_{12}(a) & =\left|A_{11}(\lambda_{1})A_{12}(\lambda_{1})\right|=\left|\left(1-A_{12}(\lambda_{1})\right)\left(1-A_{21}(\lambda_{1})\right)\right|\nonumber \\
 & \ge\left|1-\text{Re }A_{12}(\lambda_{1})\right|\left|1-\text{Re }A_{21}(\lambda_{1})\right|\nonumber \\
 & \ge\left(1-A_{12}(a)\right)\left(1-A_{21}(a)\right)\ge0\label{eq:contradiction}
\end{alignat}
Since $K(\lambda)$ is non-increasing for $\lambda\ge0$, from (\ref{eq:stability condition K(0)})
we have 
\begin{equation}
K(a)\le K(0)<0\,.\label{eq:contradiction 2}
\end{equation}
The equation (\ref{eq:contradiction 2}) is equivalent to 
\[
A_{11}(a)A_{12}(a)<\left(1-A_{12}(a)\right)\left(1-A_{21}(a)\right)\,,
\]
which obviously contradicts the equation (\ref{eq:contradiction}).
Hence, the function cannot have a complex root with a non-negative
real part. 

To establish the last statement of the lemma, observe that the function
$1-A_{12}(\lambda)$ is continuous and non-decreasing with 
\[
\lim_{\lambda\to-\infty}1-A_{12}(\lambda)=-\infty\,.
\]
Conversely, from the condition (\ref{eq:stability condition}) we
have $1-A_{12}(\lambda)>0$ for $\lambda\ge0$. Thus by Intermediate
Value Theorem there is $\lambda_{0}<0$ such that $1-A_{12}(\lambda_{0})=0$.
Consequently, evaluating $K$ at $\lambda=\lambda_{0}$ yields
\[
K(\lambda_{0})=A_{11}(\lambda_{0})A_{22}(\lambda_{0})-\left(1-A_{12}(\lambda_{0})\right)\left(1-A_{21}(\lambda_{0})\right)=A_{11}(\lambda_{0})A_{22}(\lambda_{0})\ge0\,.
\]
Hence, the function $K$ has at least one negative real root, which
completes the proof of the lemma.
\end{proof}
With the above lemma in hand, we can now state the main result of
this subsection in the form of the following condition.

\begin{cond}\label{thm:Stability condition for positive stationary solution}Suppose
that the conditions 
\[
K(0)=A_{11}(0)A_{22}(0)-\left(1-A_{12}(0)\right)\left(1-A_{21}(0)\right)<0\,,
\]
 
\[
A_{12}(0)=\left(\left\Vert k_{a}\cdot p_{*}\right\Vert _{\infty}+\left\Vert \Gamma\cdot k_{f}\right\Vert _{\infty}\right)\int_{0}^{x_{1}}\frac{1}{g(x)}\int_{0}^{x}\exp\left(-\int_{s}^{x}\frac{A(s)}{g(s)}\right)\,ds\,dx<1
\]
 and 
\[
A_{21}(0)=\int_{0}^{x_{1}}\frac{q(x)}{g(x)}\exp\left(-\int_{0}^{x}\frac{A(s)}{g(s)}\,ds\right)\,dx<1
\]
 hold true. Then, the non-trivial steady state solution $p_{*}$ is
linearly exponentially stable.\end{cond}
\begin{proof}
Lemma \ref{lem:negative root} implies that the operator $\mathcal{L}_{1}+\mathcal{C}$
has negative spectral radius, 
\[
s(\mathcal{L}_{1}+\mathcal{C})<0\,.
\]
Conversely, from \citet[\S 6, Corollary 1.11]{Engel2000}, Proposition
\ref{lem:positve semigroup} and Lemma \ref{lem:Operator C is positive}
it follows that 
\[
s(\mathcal{L})=s(\mathcal{L}_{1}+\mathcal{L}_{2}+\mathcal{L}_{3})\le s(\mathcal{L}_{1}+\mathcal{L}_{2}+\mathcal{L}_{3}+\mathcal{C}-\mathcal{L}_{2}-\mathcal{L}_{3})=s(\mathcal{L}_{1}+\mathcal{C})<0\,.
\]
Consequently, from \citep[\S 6, Theorem 1.15 ]{Engel2000} it follows
that 
\[
\omega_{0}(\mathcal{L})=s(\mathcal{L})<0\,.
\]
Hence, Proposition \ref{prop:linearized stability} yields that the
non-trivial steady state solution $p_{*}$ is linearly asymptotically
stable.
\end{proof}

\section{\label{sec:Illustration-of-the results}Illustration of the results}

In this section, we illustrate the theoretical development of the
paper by giving explicit examples. 

\begin{exmp}\label{exa:zero solution example}\end{exmp}First, observe
that $p(t,\,x)\equiv0$ is always a stationary solution of the flocculation
model. Therefore, we only have to worry about the stability conditions
derived in Section \ref{sec:zero Linearized-stability-results}. For
the maximal floc size, removal and renewal rates we choose terms similar
to that of \citep{Ackleh2014}
\[
\mu(x)=1,\quad q(x)=b(x+1)
\]
where $b$ has yet to be chosen. Moreover, we assume that growth and
fragmentation rates are proportional to the size of the floc, 
\[
g(x)=x+1\text{ and }k_{f}(x)=2x\,.
\]
Since the stability of the zero stationary solution does not depend
on the aggregation rate and the post-fragmentation density function,
we only assume that the functions $k_{a}$ and $\Gamma$ satisfy the
main assumptions (\textbf{$\mathbf{A}2$}) and ($\mathbf{A}5$), respectively.
Consequently, plugging in this values into the instability condition
(\ref{eq:zero instability condition}) yields
\[
\int_{0}^{1}\frac{q(x)}{g(x)}\exp\left(-\int_{0}^{x}\frac{\mu(s)+\frac{1}{2}k_{f}(s)}{g(s)}\,ds\right)\,dx=b\int_{0}^{1}\exp\left(-\int_{0}^{x}1\,ds\right)\,dx=b(1-e^{-1})>1\,.
\]
Thus, provided that $b>2$, the zero stationary solution of the flocculation
model is unstable. 

Conversely, plugging in the above parameters in the stability condition
(\ref{eq:condition for stability}) one obtains
\[
q(x)+\frac{1}{2}k_{f}(x)-\mu(x)=bx+b-1<0\,.
\]
Therefore, provided that we have $0<b<\frac{1}{2}$, we can guarantee
the local exponential stability of the zero stationary solution.

\begin{exmp}\end{exmp}In contrast to the zero stationary solution,
positive stationary solutions do not always exist. Based on the analysis
of Section \ref{sec:Existence-of-a postive}, if the model parameters
satisfy the existence conditions $(\mathbf{C}1)$ and $(\mathbf{C}2)$
of Section \ref{sec:Existence-of-a postive}, then the flocculation
model possesses at least one nontrivial stationary solution. Towards
this end, we choose an exponential growth rate and\emph{ a uniform
probability distribution} for the post-fragmentation density function,
\[
g(x)=\exp(-ax),\quad\Gamma(x;\,y)=\frac{\chi_{(0,\,y]}(x)}{y}\,.
\]
For other model parameters we choose linear rates,
\[
q(x)=b(x+1),\quad k_{f}(x)=cx,\quad\mu(x)=\frac{c}{2}x\,.
\]
A straightforward computation shows that the above model parameters
satisfy the first existence conditions $(\mathbf{C}1)$. Furthermore,
plugging in the above rates into the second existence condition $(\mathbf{C}2)$
yields
\begin{equation}
c\frac{e^{a}}{a}+b\frac{a+e^{a}-1-2ae^{a}}{a^{2}}\le1\,.\label{eq:second existence condition}
\end{equation}
Hence, provided that the constants $a,\,b$ and $c$ satisfy the inequality
(\ref{eq:second existence condition}), there exists a positive integrable
stationary solution $p_{*}$. Before we carry out the stability analysis
for this stationary solution $p_{*}$, the above functions should
also satisfy the regularity conditions of Section \ref{sec:Principle-of-linearized}.
However, the regularity conditions of Section \ref{sec:Principle-of-linearized}
depend heavily on the explicit form of the positive stationary solution
in $k_{a}(x,\,y)p_{*}(x)$. Since our analysis does not provide exact
form of the positive stationary solution, we further assume that the
stationary solution satisfies $p_{*}(x)>0$ for all $x\in I$. Consequently,
we tailor the aggregation kernel such that it satisfies the positivity
condition (\ref{eq:positivity condition 2}) and thus we proceed by
choosing the aggregation kernel as
\[
k_{a}(x,\,y)=\frac{d\,(1-x)(1-y)}{p_{*}(x)p_{*}(y)}\,,
\]
where the constant $d$ has to be chosen sufficiently small such that
\[
c\min_{x\in I}p_{*}(x)>d\,.
\]
This form of the aggregation kernel indeed solves the problem and
all the regularity conditions of Section \ref{sec:Principle-of-linearized}
are all satisfied.

We are now in a position to derive conditions for the stability (and
instability) of the positive stationary solution $p_{*}(x)$. Towards
this end, plugging the parameters into the instability conditions
stated in Condition \ref{thm:Instability condition for positive stationary solution}
and using the inequality (\ref{eq:second existence condition}) yields{\small{}
\[
\int_{0}^{1}\frac{q(x)}{g(x)}\exp\left(-\int_{0}^{x}\frac{\mu(s)+\frac{1}{2}k_{f}(s)+\int_{0}^{1-s}k_{a}(s,\,y)p_{*}(y)\,dy}{g(s)}\,ds\right)\,dx\le\int_{0}^{1}\frac{q(x)}{g(x)}\,dx=b\frac{a+e^{a}-1-2ae^{a}}{a^{2}}\le1\,.
\]
}This in turn implies that these parameters do not satisfy the instability
condition. However, since our instability conditions are only sufficient
conditions, this does not imply that the positive stationary solution
$p_{*}$ is always stable.

For the stability of the positive stationary solution, we plug in
the above parameters to the stability conditions stated in Condition
\ref{thm:Stability condition for positive stationary solution} and
obtain 
\begin{equation}
A_{12}(0)\le c_{1}\frac{e^{a}(a-1)+1}{a^{2}}<1,\,A_{21}(0)\le b\frac{a+e^{a}-1-2ae^{a}}{a^{2}}<1\,,\label{eq:Stability condition 2}
\end{equation}
where 
\[
c_{1}=\left\Vert k_{a}\cdot p_{*}\right\Vert _{\infty}+\left\Vert \Gamma\cdot k_{f}\right\Vert _{\infty}=\frac{d}{\min_{x\in I}p_{*}(x)}+c<2c\,.
\]
Moreover, the last stability condition, $K(0)<0$, yields 
\begin{alignat*}{1}
A_{11}(0)\cdot A_{22}(0) & <c_{1}\frac{(e^{a}-1)\left(a-2+(2a^{2}-3a+2)e^{a}\right)}{a^{4}}\\
 & <\left(1-c_{1}\frac{e^{a}(a-1)+1}{a^{2}}\right)\left(1-b\frac{a+e^{a}-1-2ae^{a}}{a^{2}}\right)\\
 & <(1-A_{12}(0))(1-A_{21}(0))\,.
\end{alignat*}
Conversely, since $c_{1}<2c$ this yields another inequality for the
parameters $a,\,b$ and $c$,
\begin{equation}
2c\frac{(e^{a}-1)\left(a-2+(2a^{2}-3a+2)e^{a}\right)}{a^{4}}<\left(1-2c\frac{e^{a}(a-1)+1}{a^{2}}\right)\left(1-b\frac{a+e^{a}-1-2ae^{a}}{a^{2}}\right)\,.\label{eq:stability condition-1}
\end{equation}
Thus, provided that we choose the parameters $a,\,b$ and $c$ such
that the inequalities (\ref{eq:second existence condition})-(\ref{eq:stability condition-1})
hold, we can guarantee local exponential stability of this positive
stationary solution. For the convenience of the readers, in Figure
\ref{fig:Stability-regions-for} we have illustrated the stability
region for the parameters $a,\,b$ and $c$. Observe that for larger
values of the parameter $a$ the stability region for the parameters
$b$ and $c$ significantly shrinks. As the value of the parameter
$a$ increases the growth rate decreases. Thus, from biological point
of view, in order to keep the non-trivial stationary solution stable
the values of the parameters $b$ and $c$, associated to removal,
fragmentation and renewal rates, should also decrease.
\begin{figure}
\begin{centering}
\includegraphics[scale=0.5]{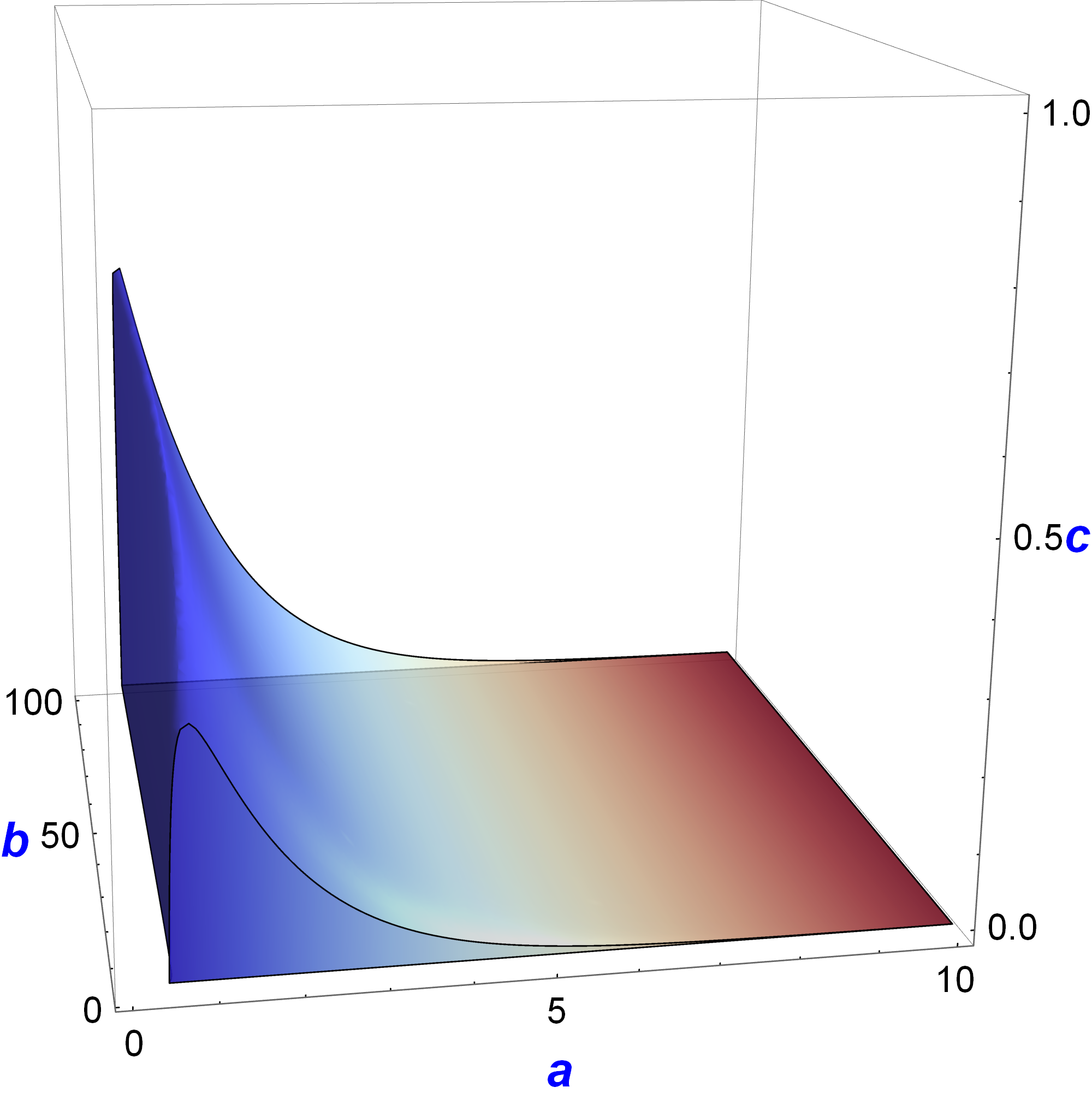}
\par\end{centering}

\protect\caption{\label{fig:Stability-regions-for}Stability region for the parameters
$a,\,b$ and $c$. Positive $a\,,b$ and $c$ values lying inside
the solid assure local exponential stability of the positive stationary
solution.}
\end{figure}

\section{\label{sec:Concluding-remarks}Concluding remarks}

Our primary motivation in this paper is to investigate the ultimate
behavior of solutions of a generalized size-structured flocculation
model. The model accounts for a broad range of biological phenomena
(necessary for survival of a community of microorganism in a suspension)
including growth, aggregation, fragmentation, removal due to predation,
and gravitational sedimentation. Moreover, the number of cells that
erode from a floc and enter the single cell population is modeled
with McKendrick-von Foerster type renewal boundary equation. Although
it has been shown that the model has a unique positive solution, to
the best of our knowledge, the large time behavior of those solutions
has not been studied.

Using a fixed point theorem we showed that under relatively weak restrictions,
which balance removal, growth, fragmentation and renewal rates, the
flocculation model possesses a non-trivial stationary solution (in
addition to the trivial stationary solution). We used the principle
of linearized stability for nonlinear evolution equations to linearize
the problem around the stationary solution. This allowed us to infer
stability of the stationary solutions by the spectral properties of
the linearized problem. We then used the rich theory developed for
semigroups, to derive the stability and instability conditions for
the zero stationary solution. To derive instability conditions for
the non-trivial stationary solution, we employed an eigenvalue localization
method based on the well-known Krein-Rutman theorem. Lastly, we used
compactness and positivity arguments to derive conditions for local
stability of the non-trivial stationary solutions. 

Lastly, even though we showed that the flocculation model has at least
one non-trivial stationary solution, our analysis does not state these
stationary solutions explicitly. The nonlinearity introduced to the
model by Smoluchowski coagulation equations, makes the task of finding
explicit stationary solutions challenging even for constant model
parameters. On the other hand, when only Smoluchowski coagulation
equation is considered, it has been shown that the model has closed
form self-similar solutions for constant and additive aggregation
kernels \citep{Menon2004,Wattis2006a}. Perhaps, under some conditions
on the initial distribution and model parameters, solutions of the
flocculation model also converge to self-similar profiles. Hence,
as a future research, we plan to further investigate self-similar
solutions of the flocculation model.

\section*{Acknowledgements}

Funding for this research was supported in part by grants NSF-DMS
1225878 and NIH-NIGMS 2R01GM069438-06A2. 

\bibliographystyle{apalike}
\bibliography{bib_file}

\end{document}